\newtheorem{theorem}{Theorem}
\newtheorem{definition}{Definition}
\newtheorem{lemma}{Lemma}
\newtheorem{result}{Result}
\newenvironment{example}[1]{
    \begin{mdframed}[topline=false,bottomline=false,rightline=false,innertopmargin=0pt,linewidth=2pt,linecolor=lightgray]
    \paragraph{#1.}}{%
    \end{mdframed}}
\newcommand{\comment}[1]{}
\newcommand{\bepsilon}{\bm{\epsilon}}
\newcommand{\bbeta}{\bm{\beta}}
\newcommand{\bzero}{\bm{0}}
\newcommand{\gradient}[1]{\nabla_{\bm{#1}}}
\newcommand{\evalat}[1]{\bigg \rvert_{#1}}
\newcommand{\PsiP}{\Psi_P}
\newcommand{\Psizero}{\psi_0}
\newcommand{\model}{\mathcal{M}}
\newcommand{\reals}{\mathbb{R}}
\newcommand{\E}{\mathbb{E}}
\newcommand{\Peps}{P_{\epsilon}}
\DeclareMathOperator*{\argmin}{\arg\,\min}
\DeclarePairedDelimiter\norm{\lVert}{\rVert}
\newcommand{\MCOsymb}{\sharp}
\newcommand{\CATEsymb}{\sharp}
\newcommand{\QbarFluctuation}[2]{\bar{Q}_{{#1},{#2}}}
\newcommand{\QFluctuation}[2]{Q_{{#1},{#2}}}
\newcommand{\cleverH}{H}
\newcommand{\tmleLoss}{\mathcal{L}}
\newcommand{\Beps}{B^\circ_P}
\newcommand{\tmleBeta}{\hat{\bbeta}_n^*}
\newcommand{\evalepszero}{\bigg \rvert_{\epsilon = 0, \bbeta = \Beps(0)}}
\newcommand{\ci}{\mathrel{\text{\scalebox{1.07}{$\perp\mkern-10mu\perp$}}}}
\title{Inference in Marginal Structural Models by Automatic Targeted Bayesian and Minimum Loss-Based Estimation}
\author{Herbert Susmann$^1$, Antoine Chambaz$^2$\\[1em]
  $^1$ Department of Biostatistics \& Epidemiology, University of Massachusetts Amherst\\
  $^2$ Université Paris Cité, CNRS, MAP5, F-75006 Paris, France
}
\date{\today}
\begin{document}

\maketitle

\section*{Abstract}
Two of the principle tasks of causal inference are to define and estimate the effect of a treatment on an outcome of interest. Formally, such treatment effects are defined as a possibly functional summary of the data generating distribution, and are referred to as \textit{target parameters}. Estimation of the target parameter can be difficult, especially when it is high-dimensional. Marginal Structural Models (MSMs) provide a way to summarize such target parameters in terms of a lower dimensional working model. We introduce the semi-parametric efficiency bound for estimating MSM parameters in a general setting. We then present a frequentist estimator that achieves this bound based on Targeted Minimum Loss-Based Estimation. Our results are derived in a general context, and can be easily adapted to specific data structures and target parameters. We then describe a novel targeted Bayesian estimator and provide a Bernstein von-Mises type result analyzing its asymptotic behavior. We propose a universal algorithm that uses automatic differentiation to put the estimator into practice for arbitrary choice of working model. The frequentist and Bayesian estimators have been implemented in the Julia software package \texttt{TargetedMSM.jl}. Finally, we illustrate our proposed methods by investigating the effect of interventions on family planning behavior using data from a randomized field experiment conducted in Malawi. 

\section{Introduction}
Causal inference is concerned with defining and estimating the effect of a treatment on an outcome of interest. For example, consider the \textit{Conditional Average Treatment Effect} parameter. Let $O_{1:n}$ be $n$ i.i.d. draws from a distribution $P_0$ of a generic variable $O = (X, A, Y)$ where $X$ is a vector of covariates, $A$ is a binary treatment indicator, and $Y$ a binary outcome. The CATE is defined as the following functional summary of $P$,
\begin{align}
    \Psi_P^{\CATEsymb}(X) := \E_P\left[Y \middle| A = 1, X\right] - \E_P\left[Y \middle| A = 0, X\right],
\end{align}
and is interpretable as the expected difference in outcome given treatment vs. non-treatment within strata of covariates $X$.

Estimating $\Psi^{\CATEsymb}_P(X)$ may be challenging, especially when $X$ is high-dimensional. One way to proceed would consist in assuming (recklessly) that $\Psi^{\CATEsymb}_P(X)$ belongs to a known low-dimensional parametric model. Suppose there are $V \subset X$ potential \textit{treatment effect modifiers}. A parametric model for the marginal distribution of $\Psi_P^{\CATEsymb}(X)$ that assumes a linear functional form conditional on $V$ could be
\begin{align}
    \label{eq:example-msm}
    \E_P[\Psi_P^{\CATEsymb}(X) \mid V ] = \text{MSM}_{\bbeta}(V) := \bbeta^\top (1, V)^\top.
\end{align}
The target of estimation would then be the parameter $\bbeta \in \mathcal{B}$, which is of lower dimension than $\Psi_P^\CATEsymb(X)$. This approach was originally introduced by \cite{robins1998marginal}; in this line of research, the assumed parametric model on the marginal distribution of the target parameter is referred to as a \textit{Marginal Structural Model} (MSM).

In practice, the correct functional form of the parameter of interest will not be known. Any parametric model that is adopted will usually be misspecified. Non-parametric Marginal Structural Models build on MSMs without assuming correct specification \citep{neugebauer2007nonparametric, van2011targeted}. The idea is to summarize the parameter of interest in terms of a lower-dimensional working model with respect to a loss function. For example, in the spirit of \eqref{eq:example-msm}, we could define a new parameter $B(P)$ as
\begin{align}
    B(P) := \argmin_{\bbeta \in \mathcal{B}}  \E_P \left[ \left( \Psi_P^{\CATEsymb}(X) - \text{MSM}_{\bbeta}(V) \right)^2 \right].
\end{align}
That is, $\text{MSM}_{B(P)}(V)$ is the best linear approximation of $\Psi_P^{\CATEsymb}(X)$ in terms of $V$ with respect to squared-error loss function. Note that, in contrast to the first approach, the definition of $B(P)$ introduces no statistical assumptions on the form of $\Psi_P^{\CATEsymb}$. We adopt this non-parametric approach to MSMs in the current work, providing a general definition and notation in Section \ref{section:msm-definition}.

Continuing in the same spirit, we seek to estimate $B(P_0)$ as non-parametrically as possible, without making any parametric assumptions on the distribution $P_0$. We only assume that $P_0$ is within a non-parametric model $\mathcal{M}$ of laws defined on the support of the observed data. The goal is to construct an asymptotically normal and efficient estimator of $B(P_0)$. The first step is to derive the \textit{efficient influence function} (EIF) $D^*(P)$ of the target parameter at any $P \in \mathcal{M}$. Knowledge of the EIF is key because its variance defines the semi-parametric efficiency bound of estimating $B(P_0)$, and it serves as a building block for constructing efficient  non-parametric estimators. We review relevant ideas from semi-parametric efficiency theory and present the form of the EIF for $B$ in Section \ref{section:efficiency-theory}. 

Next, we draw on the Targeted Learning framework, specifically Targeted Minimum Loss-Based Estimation (TMLE), to build an efficient estimator of $B(P_0)$ \citep{van2011targeted, van2018targeted}. Suppose we have an initial estimate of $P_0$, $P_n^\circ\in\model$. While the plug-in estimator $B(P_n^\circ)$ will typically be biased, the core insight of TMLE is that it is possible to mitigate this bias and achieve asymptotic normality by carefully fluctuating the initial estimate $P_n^\circ$. In broad strokes, we define a parametric fluctuation $P^\circ_{n,\bepsilon}$ with parameter $\bepsilon \in \mathbb{R}^p$. An estimate $\bepsilon_n^*$ of $\bepsilon$ is found by minimizing an empirical risk associated with a carefully chosen loss function. An updated estimate of $B(P_0)$ is then derived by plugging in the fluctuated law, as $B(P^\circ_{n,\bepsilon_n^*})$. The key property of the fluctuation and loss function, from which the desirable properties of TMLE are derived, is that the fluctuated law empirically solves the efficient influence function $D^*$; that is,
\begin{align}
    \frac{1}{n}\sum_{i=1}^n D^*(P^\circ_{n,\bepsilon_n^*})(O_i) = o_p(n^{-1/2}).
\end{align}
It is for this reason that the estimator is called \textit{targeted}, because it solves the efficient influence function of the parameter of interest.
Under conditions on the estimator $P_n^\circ$, it is possible to show that the updated estimate is asymptotically normal and efficient with asymptotic variance given by the variance of the efficient influence function. 

Estimators based on TMLE have been developed for a number of marginal structural models in various contexts. These include estimators for vector-valued treatment assignments, time-varying treatments, and instrumental variable designs \citep{stein1956efficient,rosenblum2010targeted,petersen_tmle2014, toth2016tmle,zheng2016doubly,zheng2018marginal}. In practice, users may wish to test several alternative working model specifications and loss functions. This is difficult to do using existing approaches, as the estimators (and related software) are constructed for specific classes of working models and loss functions. Our results build off these works by proposing a TMLE based estimation strategy in a generalized data structure and for arbitrary MSM loss functions and working models. In Section \ref{section:tmle} we provide a blueprint for building targeted estimators for MSMs with arbitrary working models and loss functions. Our quite general software implementation is made possible by an algorithm that adapts to the choice of loss function and working model using automatic differentiation \citep{baydin2018automatic}. 

Statistical inference can be loosely categorized into frequentist and Bayesian approaches, corresponding to different interpretations of probability \citep{bayarri2004interplay, hajek2019probability}. TMLE was developed within the frequentist paradigm, drawing on a long line of research in  semi-parametric efficiency theory; see \cite{ bickel1993efficient,vanDerVaart1996weak,vandervaart2000asymptotic} among many others. A Bayesian version of TMLE was first discussed in \cite{van2008construction}, based on the observation that, if a likelihood is adopted for the parametric fluctuation $P^\circ_{n,\bepsilon}$, then the parameter $\bepsilon$ can be estimated via Bayesian inference. Such an approach was later extended to estimation of average treatment effects and class proportions in an unlabeled dataset \citep{diaz2011targeted, diaz2020nonparametric}. We build on this research, and describe a novel Bayesian estimator in Section \ref{section:bayesian-tmle} that is the first Bayesian targeted estimator to be proposed for MSMs.

As an illustrative example, we consider the problem of estimating the association between a multi-faceted family planning intervention on contraceptive use in a cohort of women in Malawi using data collected in a randomized field experiment \citep{karra2022fptrial}. We define a Marginal Structural Model to summarize how the treatment effect depends on the participant's number of children. The results are presented using both frequentist and Bayesian approaches in Section \ref{section:motivating-application}.

In summary, the rest of the article proceeds as follows. In Section \ref{section:msm-definition} we provide a general definition and notation for Marginal Structural Models. In Section \ref{section:efficiency-theory} we use results from semi-parametric efficiency theory to derive the efficiency bound for estimating MSMs in non-parametric models. In Section \ref{section:tmle} we present an estimator based on Targeted Minimum Loss-Based Estimation that achieves the efficiency bound. In Section \ref{section:bayesian-tmle} we develop a novel targeted Bayesian estimator for MSMs. In Section \ref{section:computation} we discuss details of the software implementation. In Section \ref{section:simulation} we share the results of a simulation study. In Section \ref{section:motivating-application} apply the estimators to an example. Finally, we conclude with a discussion in Section \ref{section:discussion}. 

\section{Marginal Structural Models}
\label{section:msm-definition}

\subsection{Statistical Viewpoint}
\label{sec:stat:view}
Recall that we have $n$ i.i.d. draws $O_{1:n}$ from a distribution $P_0$ of a generic variable $O$. Let $\mathcal{O}$ be the support of $P_0$. We assume only that $P_0$ falls in a non-parametric model $\mathcal{M}$ of laws on $\mathcal{O}$. Further, suppose $O$ can be decomposed into two variables, $O = (X, Z)$ (this is always possible, as $Z$ can be set to the trivial set). Let $\mathcal{X}$ and $\mathcal{Z}$ denote the spaces to which $X$ and $Z$ belong. For any $P \in \mathcal{M}$, let $\PsiP: \mathcal{X} \to \mathbb{R}$ be a functional summary of $P$, with $\Psi_P$ a member of the function class $\mathcal{F}$. For later convenience, we introduce $\mathcal{T} := \cup_{P \in \model} \PsiP(\mathcal{X})$ (that is, the union of the images of all $\PsiP$s). We focus on instances where $\mathcal{T}$ is an open set. We also assume that $\Psi_P$ depends on $P$ through a finite tuple of parameters $\bar{Q}_P = \left( \bar{Q}^{(0)}_P, \dots, \bar{Q}^{(J)}_P \right)$.

\begin{example}{Example}
Conditional Average Treatment Effect. Let $Z = (A, Y)$, where $A$ is a binary treatment indicator and $Y$ is a binary outcome. Let $g_P(a, x) := P\left(A = a \middle| X = x\right)$, and suppose that $g_P(a, X) > 0$ holds $P$-almost-surely for all $P \in \mathcal{M}$ and $a \in \{0,1\}$. Let $\bar{Q}_P^{(a)}(X) := \E_P\left[ Y \middle| A = a, X\right]$, which is defined almost surely on $\mathcal{X}$. Define $\PsiP^\CATEsymb(x) := \bar{Q}^{(1)}_P(x) - \bar{Q}^{(0)}_P(x)$, interpretable as the expected difference in the outcome for treatment vs. non-treatment within the strata of covariates $X = x$. Appendix \ref{section:supplementary-example} considers the case when $Y \in \reals$.
\end{example}

\paragraph{} For convenience, we will write $\Psizero := \Psi_{P_0}$ to denote the functional under the true data generating distribution $P_0$. Estimating the functional $\Psizero$ across its entire domain may be challenging, especially if $\mathcal{X}$ is high dimensional. The idea with MSMs is to seek an approximation of the functional $\Psizero$ defined by a lower-dimensional working model. In other words, instead of estimating $\Psizero$, we estimate a parameter for the working model that leads to the best possible approximation of $\Psizero$ within the working model as measured by the risk induced by a well-chosen, problem-specific loss function.

Formally, a loss function is a function $L : \reals \times \reals \to \mathbb{R}$ such that, for all $f \in \mathcal{F}$, 
\begin{align}
    \mathbb{E}_P\left[ L(\Psi_P(X), \Psi_P(X)) \right] \leq  \mathbb{E}_P\left[ L(\Psi_P(X), f(X)) \right].
\end{align}
 The working model is a collection $\{ m_{\bbeta} : \bbeta \in \mathcal{B} \}$ of functions $m_{\bbeta} : \mathcal{X} \to \mathbb{R}$ with $\mathcal{B}$ a parameter of dimension $p$. For convenience, we write $L_m(t, \bbeta)(X) := L(t(X), m_{\bbeta}(X))$ for all $t \in \mathcal{T}$ and $\bbeta \in \mathcal{B}$.
 The target parameter $B(P) \in \mathcal{B}$ is defined as any solution to the following optimization problem:
\begin{align}
    \label{eq:optimization-problem}
    B(P) \in \argmin_{\bbeta \in \mathcal{B}} \mathbb{E}_{P} \left[ L_m(\PsiP, \bbeta)(X) \right].
\end{align}
Note that $B(P)$ depends on $P$ only through the functional summary $\PsiP$ and the marginal distribution of $X$, which we denote $Q$.

From now on, we assume that there is a unique minimizer of \eqref{eq:optimization-problem} in the case where $P = P_0$. We call this minimizer $\bbeta_0 := B(P_0)$. Moreover, we focus on loss functions $L$ such that $\bbeta \mapsto L_m(\Psizero, \bbeta)(X)$ is twice differentiable with a Hessian at $\bbeta_0$ denoted by $\ddot{L}_m(\Psizero, \bbeta)(X)$. In a nutshell, it therefore holds that $\bbeta_0$ solves the implicit equation
\begin{align}
    \label{eq:Up-definition}
    \bm{0} = \frac{\partial}{\partial \bbeta} \mathbb{E}_{P_0}\left[  L_m(\Psizero, \bbeta)(X) \right] \evalat{\bbeta = \bbeta_0}.
\end{align}
Assuming that the gradient and expected value can be interchanged, then $\bbeta_0$ also solves the equation
\begin{align}
    \label{eq:implicit-eq-zero}
    \bm{0} = \mathbb{E}_{P_0}\left[\ddot{L}_m(\Psizero, \bbeta_0)(X) \right].
\end{align}
This equality will play an important role in the semi-parametric analysis of the target functional $B$.

\begin{example}{Example (cont'd)}
Let $V \subset X$ be a set of ($p - 1$) \textit{treatment effect modifiers}. 
We adopt a linear working model with an intercept term
\begin{align}
    \label{eq:linear-working-model}
    m_{\bbeta} : X \mapsto \bbeta^\top (1, V)^\top \quad \text{ for all } \bbeta \in \mathcal{B} := \reals^p,
\end{align}
and a squared-error loss function given by $L(a, b) := (a-b)^2$, hence
\begin{align}
    \label{eq:squared-error-loss}
    L_m(\PsiP^\CATEsymb, \bbeta)(X) = (\PsiP^\CATEsymb(X) - m_{\bbeta}(X))^2.
\end{align}
The target parameter is then given by
\begin{align}
    B^\CATEsymb(P) = \argmin_{\bbeta \in \mathcal{B}} \E_P\left[ \left( \PsiP^\CATEsymb(X) - \bbeta^\top (1, V)^\top\right)^2 \right].
\end{align}
Note that when $V = \varnothing$ the parameter $B^\CATEsymb(P)$ reduces to the Average Treatment Effect (ATE):
\begin{align}
    B^\CATEsymb(P) &= \E_P\left[ \PsiP^\CATEsymb(X) \right] = \E_P\left[\bar{Q}_P^{(1)}(X) - \bar{Q}_P^{(0)}(X) \right].
\end{align}
\end{example}

\subsection{Causal Viewpoint}
While the main focus of this work is statistical, we review briefly the causal interpretation of the target parameter $B(P)$. 
The parameter $B(P)$ can be viewed as a projection of the parameter $\Psi_P$ onto a lower-dimensional space. As such, $B(P)$ is causally identifiable under the same conditions for which $\Psi_P$ is identifiable. Formally, let $\mathbb{P}$ the causal law from which $P$ arises, that is,  $\mathbb{P}$ is the law of the complete data (including unobservable potential outcomes) and $P$ is a joint marginal law thereof.  Let $\Psi_\mathbb{P}^C$ be a causal functional summary of $\mathbb{P}$ and $B^C(\mathbb{P})$ be the causal analog to $B(P)$, defined as
\begin{align}
    \label{eq:psi-causal}
    B^C(\mathbb{P}) = \argmin_{\bbeta \in \mathcal{B}} \E_P\left[ L_m(\Psi_\mathbb{P}^C, \bbeta)(X) \right].
\end{align}
Suppose that there are a set of identification assumptions sufficient to show that $\Psi_P(X) = \Psi^C_\mathbb{P}(X)$. It is then straightforward to see that $B(P) = B^C(\mathbb{P})$ under the same conditions by direct substitution of $\Psi_P(X)$ for $\Psi_\mathbb{P}^C(X)$ in \eqref{eq:psi-causal}.

\begin{example}{Example (cont'd)}
    The parameter $\Psi^{\CATEsymb}_P$ is identifiable under standard causal assumptions which are reviewed below for completeness. Let $Y(a)$ be the potential outcome under treatment assignment $a \in \{ 0, 1 \}$. Define the causal parameters
    \begin{align}
        \Psi^{C,\CATEsymb}_\mathbb{P}(X) &:= \E_{\mathbb{P}}[Y(1) \mid X] - \E[Y(0) \mid X], \\
        B^{C, \CATEsymb}(\mathbb{P}) &:= \argmin_{\bbeta \in \mathcal{B}} \E_\mathbb{P}\left[ \left( \Psi_{\mathbb{P}}^{C, \CATEsymb}(X) - \bbeta^\top (1, V)^\top\right)^2 \right].
    \end{align}
    We seek conditions under which $B^{C, \CATEsymb}(\mathbb{P}) = B^{\CATEsymb}(P)$. Assume
    \begin{enumerate}
        \item \label{assumption-consistency} Consistency: $Y = Y(A)$. 
        \item  \label{assumption-positivity} Positivity: $g_P(a, X) > 0$ for both $a\in\{0,1\}$, $P$-almost surely.
        \item \label{assumption-no-unmeasured-confounders} No unmeasured confounders: under $\mathbb{P}$, $Y(a) \ci A \mid X$ for both $a \in \{ 0, 1 \}$.
    \end{enumerate}
    Under these conditions,
    \begin{align}
        \Psi_{\mathbb{P}}^{C, \CATEsymb}(X) &= \E_{\mathbb{P}}[Y(1) \mid X] - \E_{\mathbb{P}}[Y(0) \mid X] \\
        &= \E_{\mathbb{P}}[Y(1) \mid A = 1, X] - \E_{\mathbb{P}}[Y(0) \mid A = 0, X] \text{ (assumptions \ref{assumption-positivity} and \ref{assumption-no-unmeasured-confounders})} \\
        &= \E_P[Y \mid A = 1, X] - \E_P[Y \mid A = 0, X] \text{ (assumption \ref{assumption-consistency})} \\
        &= \Psi^{\CATEsymb}_P(X).
    \end{align}
    It then follows directly that $B^{C, \CATEsymb}(\mathbb{P}) = B^{\CATEsymb}(P)$.
\end{example}

\section{Semi-Parametric Efficiency Theory}
\label{section:efficiency-theory}

\paragraph{} Our goal is to find an asymptotically efficient estimator of $\bbeta_0 := B(P_0) \in \reals^p$, the value of the parameter of interest under the true data generating distribution $P_0$. We first review some semi-parametric efficiency theory through which we can derive the non-parametric efficiency bound for estimating $\bbeta_0$ \citep{vdv02}.

In this section, we derive the semi-parametric efficiency bound for estimating $\bbeta_0$ in a non-parametric model $\model$ (that is, $\model$ is the set of all laws $P$ on $\mathcal{X} \times \mathcal{Z}$ such that $B(P)$ is well-defined). The semi-parametric efficiency bound for estimating $B(P)$ is defined via the concept of the hardest parametric submodel. For any $P\in\model$, for any integer $k \geq 1$ and for all 
\begin{align}
    s \in \mathcal{S} := \left\{  h \in (L_0^2(P))^k : h \neq 0, \norm{h}_\infty < \infty, \E_P[h(O) h(O)^\top] \text{ invertible} \right\},
\end{align}
define a parametric submodel $ \mathcal{P}_s := \{ P_{s,\epsilon} : \bepsilon \in \reals^k, \norm{\bepsilon}_\infty < \infty\} \subset \model$, characterized by $dP_{\epsilon, s} = (1 + \bepsilon^\top s) dP$. The invertibility condition in the definition of $\mathcal{S}$ amounts to a condition that $\mathcal{P}_s$ is identifiable. Note that $P_{s, \epsilon} = P$ at $\epsilon = 0$, and the score of   $P$ equals $s$ at $\epsilon = 0$. That is, $\mathcal{P}_s$ is a fluctuation of $P$ in the direction $s$.

Assume the derivative $\frac{\partial}{\partial\bepsilon} B(P_{\epsilon, s})$ exists at $\bepsilon = \bzero$ for any $s \in \mathcal{S}$. 
The Cramér-Rao bound for estimating $B(P)$ within the submodel $\mathcal{P}_{s}$ (that is, the lowest asymptotic variance possible for an unbiased estimator of $B(P)$, in the sense of the Loewner order: for any $A$, $B$ Hermitian matrices, $A \geq B$ if $A - B$ is positive semi-definite) is given by
\begin{align}
    \left[ \frac{\partial}{\partial \bepsilon} B(P_{\epsilon, s}) \evalat{\bepsilon = \bzero} \right]^\top \E_{P}[s(O) s(O)^\top]^{-1} \left[ \frac{\partial}{\partial \bepsilon} B(P_{\epsilon, s}) \evalat{\bepsilon = \bzero} \right].
\end{align}
Assume further that there exists a continuous, bounded, linear map $\dot{B}$ mapping $\mathrm{closure}(\mathcal{S})$ to $\reals^{k \times p}$ such that, for all $s \in \mathcal{S}$, 
\begin{align}
    \frac{\partial}{\partial \bepsilon} B(P_{\epsilon, s}) \evalat{\bepsilon = \bzero} &= \dot{B}(P_{\epsilon, s}).
\end{align}
We then say that $B$ is \textit{pathwise differentiable} at $P$ with respect to $\{ \mathcal{P}_s : s \in \mathcal{S} \}$.
By the Riesz representation theorem, the derivative can be expressed as
\begin{align}
    \frac{\partial}{\partial\bepsilon} B(P_{\epsilon, s}) \evalat{\bepsilon = \bzero} = \E_P[s(O) D^*(P)(O)^\top]
\end{align}
for a function $D^*(P) \in (L_0^2(P))^p$ which is called the \textit{efficient influence function} (EIF) of the parameter $B$ at $P$. For later convenience, let $\lambda^*(P) := D^*(P) D^*(P)^\top$. We can then rewrite the Cramér-Rao bound as
\begin{align}
    \E_P[s(O) D^*(P)(O)^\top]^\top \E_P[s(O) s(O)^\top]^{-1} \E_P[s(O) D^*(P)(O)^\top].
\end{align}
The \textit{hardest parametric submodel}, i.e. the submodel for which estimating $B(P)$ is the hardest, is the submodel with the largest Cramér-Rao bound (for the Loewner order). Slightly tedious algebra and the Cauchy-Schwarz inequality imply that the largest  Cramér-Rao bound is
\begin{align}
    \E_P[\lambda^*(P)(O)].
\end{align}
The semi-parametric efficiency bound for estimating $B(P)$ within a non-parametric model $\model$ is then defined as $\E_P[\lambda^*(P)(O)]$. Deriving the form of $D^*(P)$ is therefore crucial if we wish to construct efficient non-parametric estimators of $B(P)$. 

In the rest of the section the parametric submodel $\mathcal{P}_s$ is a theoretical tool used to derive the form of $D^*$, so for simplicity we can choose $\bepsilon$ to be of dimension one. In the next theorem, we present detailed conditions under which $B(P)$ is pathwise differentiable, and characterize the form of $D^*(P)$. 

\begin{result}[Efficient Influence Function of $B$]
\label{theorem:general-eif:short}
    Let us make the following assumption: for all $P \in \model$ and for all $s \in L^2_0(P)$ such that $s \neq 0$, $\norm{s}_\infty < \infty$, if $\left\{ P_\epsilon : |\epsilon| < \norm{s}_\infty^{-1} \right\} \subset \model$ is characterized by $\frac{dP_\epsilon}{dP} = 1 + \epsilon s$ for every $\epsilon \in \reals$ such that $|\epsilon| < \norm{s}_\infty^{-1}$, 
    then the mapping $\epsilon \mapsto \Psi_{P_\epsilon}(X)$ is almost surely differentiable at 0 and
    \begin{align}
        \frac{d}{d\epsilon} \Psi_{\Peps}(X) \evalat{\epsilon = 0} = \E_P \left[\Delta^*(P)(O) s(O) \mid X \right]
    \end{align}
    for a function $\Delta^*(P) \in L_0^2(P)$. Under further assumptions stated in Theorem~\ref{theorem:general-eif:full}, the target functional $P \mapsto B(P)$ is pathwise differentiable at every $P \in \model$, with an efficient influence function $D^*(P)$ given by
\begin{align}
    D^*(P)(O) &= M^{-1} \left[
        D_1^*(P)(O) + D_2^*(P)(X)
    \right],
\end{align}
where $D_1^*(P), D_2^*(P) \in L_0^2(P)$ are given by
\begin{align}
    D_1^*(P)(O) &= \gradient{} \dot{L}(\Psi_P(X), B(P))(X) \times \Delta^*(P)(O), \\
    D_2^*(P)(X) &= \dot{L}(\Psi_P(X), B(P))(X),
\end{align}
and the normalizing matrix $M$ is given by
\begin{align}
    M = -\mathbb{E}_{P}\left[\ddot{L}_m(\PsiP(X), B(P))(X) \right].
\end{align}
\end{result}
The full statement (Theorem~\ref{theorem:general-eif:full}) and its proof are given in Appendix \ref{proof:general-eif}.

\begin{example}{Example (cont'd)}
We can derive the efficient influence function $D^*_\CATEsymb(P)$ for $B^\CATEsymb$ at $P$ by applying Theorem \ref{theorem:eif-example}. First, we prove a lemma giving the conditional efficient influence function $\Delta_{\CATEsymb}^*(P)$ for~$\Psi_P^\CATEsymb$ at $P$.

\begin{lemma}[Conditional Efficient Influence Function of $\Psi^{\CATEsymb}$]
\label{lemma:eif-delta-example}
The functional summary $\Psi^{\CATEsymb}$ satisfies the first assumption of Result~\ref{theorem:general-eif:short} with a conditional efficient influence function $\Delta^*_{\CATEsymb}(P) \in L_0^2(P)$  at any $P\in\model$  given by
\begin{align}
    \Delta^*_{\CATEsymb}(P)(O) = \left\{ \frac{\mathbb{I}(A = 1)}{g_P(1, X)} - \frac{\mathbb{I}(A = 0)}{g_P(0, X)}  \right\} (Y - \bar{Q}^{(A)}_P(X)).
\end{align}
\end{lemma}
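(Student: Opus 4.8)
The plan is to compute the pathwise derivative of $\epsilon \mapsto \Psi^{\CATEsymb}_{\Peps}(x)$ directly along the fluctuation $dP_\epsilon = (1 + \epsilon s)\,dP$ and to read off $\Delta^*_{\CATEsymb}(P)$ from the resulting conditional-expectation representation. Recall that $\Psi^{\CATEsymb}_P = \bar{Q}^{(1)}_P - \bar{Q}^{(0)}_P$ with $\bar{Q}^{(a)}_P(x) = \E_P[Y \mid A = a, X = x]$, so it suffices to differentiate each $\bar{Q}^{(a)}_{\Peps}$ at $\epsilon = 0$ and subtract.

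First I would express $\bar{Q}^{(a)}_{\Peps}$ through the change of measure. Since the conditional law of $Z = (A,Y)$ given $X = x$ under $P_\epsilon$ has density $(1 + \epsilon s(x, z)) / (1 + \epsilon \E_P[s(O) \mid X = x])$ relative to the corresponding conditional law under $P$, the common factor cancels in the conditional mean of $Y$, giving, for $P$-almost every $x$,
\[
    \bar{Q}^{(a)}_{\Peps}(x) = \frac{\E_P[\mathbb{I}(A=a)\,Y\,(1 + \epsilon s(O)) \mid X = x]}{\E_P[\mathbb{I}(A=a)\,(1 + \epsilon s(O)) \mid X = x]} =: \frac{N_a(\epsilon, x)}{D_a(\epsilon, x)}.
\]
Both $N_a$ and $D_a$ are affine in $\epsilon$, with $D_a(0,x) = g_P(a,x)$ and $N_a(0,x) = g_P(a,x)\,\bar{Q}^{(a)}_P(x)$.

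Because $\norm{s}_\infty < \infty$ and $|\epsilon| < \norm{s}_\infty^{-1}$, we have $1 + \epsilon s(O) \ge 1 - |\epsilon|\norm{s}_\infty > 0$, hence $D_a(\epsilon,x) \ge (1 - |\epsilon|\norm{s}_\infty)\,g_P(a,x) > 0$ for $P$-almost every $x$ by positivity; so $\epsilon \mapsto N_a/D_a$ is differentiable at $0$. The quotient rule (the interchange of differentiation and conditional expectation being immediate since $N_a$ and $D_a$ are affine in $\epsilon$) then yields
\[
    \partialeps \bar{Q}^{(a)}_{\Peps}(x)\evalat{\epsilon = 0}
    = \E_P\left[\frac{\mathbb{I}(A=a)}{g_P(a,X)}\bigl(Y - \bar{Q}^{(A)}_P(X)\bigr) s(O) \,\middle|\, X = x\right],
\]
using that on $\{A = a\}$ one has $\bar{Q}^{(A)}_P = \bar{Q}^{(a)}_P$. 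Subtracting the $a=0$ identity from the $a=1$ identity gives $\partialeps \Psi^{\CATEsymb}_{\Peps}(x)\evalat{\epsilon=0} = \E_P[\Delta^*_{\CATEsymb}(P)(O)\, s(O) \mid X = x]$ with $\Delta^*_{\CATEsymb}(P)$ exactly of the stated form.

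It remains to check $\Delta^*_{\CATEsymb}(P) \in L^2_0(P)$: the mean-zero property follows from iterated expectations and the definition of $\bar{Q}^{(a)}_P$, since $\E_P[\mathbb{I}(A=a)(Y - \bar{Q}^{(a)}_P(X)) \mid X] = 0$, while square-integrability follows from $Y$ being binary (so $|Y - \bar{Q}^{(a)}_P(X)| \le 1$) together with positivity, strengthened if necessary so that $\E_P[g_P(a,X)^{-2}] < \infty$. The only genuinely delicate point is the measure-theoretic bookkeeping in the first step — correctly identifying the conditional density of $Z$ given $X$ under the reweighted law so that the ratio representation of $\bar{Q}^{(a)}_{\Peps}$ is justified; once that is in place, the affine structure of $N_a$ and $D_a$ makes the differentiation and the subsequent rearrangement routine.
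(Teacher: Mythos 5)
Your proposal is correct and follows essentially the same route as the paper's proof: both express $\bar{Q}^{(a)}_{P_\epsilon}(x)$ as a ratio of quantities affine in $\epsilon$ (the paper cites a lemma of Chambaz et al.\ for this representation, where you derive it directly from the form of the conditional density under $P_\epsilon$), differentiate the ratio at $\epsilon = 0$, and convert the conditioning on $\{A=a, X\}$ into conditioning on $X$ via the indicator-over-propensity weighting before subtracting the $a=0$ case from the $a=1$ case. The additional verification that $\Delta^*_{\CATEsymb}(P) \in L^2_0(P)$ is a welcome touch that the paper leaves implicit.
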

The proof is given in Appendix \ref{proof:eif-delta-example}. Note that the form of $\Delta^*_{\CATEsymb}$ is recognizable as a part of the efficient influence function for the Average Treatment Effect \citep[Chapter 5]{van2011targeted}. With this result in hand, we are ready to state the efficient influence function for the parameter $B^\CATEsymb$.

\begin{theorem}[Efficient Influence Function of $B^{\CATEsymb}$]
\label{theorem:eif-example}
Suppose that Assumptions \eqref{assumption:implicit-function-theorem}-\eqref{assumption:bounded-derivative} of Theorem~\ref{theorem:general-eif:full} are satisfied. Then the target functional $P \mapsto B^{\CATEsymb}(P)$ is pathwise differentiable at every $P \in \model$, with an efficient influence function $D_{\CATEsymb}^*(P)$ given by
 \begin{align}
    D^*_{\CATEsymb}(P)(X, A, Y) = M^{-1} \left\{ D^*_{1,\CATEsymb}(P)(X, A, Y) + D^*_{2,\CATEsymb}(P)(X) \right\},
\end{align}
where 
\begin{align}
    D^*_{1,\CATEsymb}(P)(X, A, Y) &= \left\{ \frac{I(A = 1)}{g_P(1, X)} - \frac{I(A = 0)}{g_P(0, X)} \right\} (Y - \bar{Q}^{(A)}_P(X))(1, V)^\top, \\
    D^*_{2,\CATEsymb}(P)(X) &= (\Psi^{\CATEsymb}_P(X) - B^\CATEsymb(P)^\top (1, V)^\top)(1, V)^\top,
\end{align}
and the normalizing matrix $M$ is given by
\begin{align}
    M = -\mathbb{E}_{P}\left[(1, V)^\top (1, V)\right].
\end{align}
\end{theorem}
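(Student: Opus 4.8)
The plan is to obtain $D^*_{\CATEsymb}(P)$ as a direct specialization of the general formula of Theorem~\ref{theorem:general-eif:full}, with functional summary $\Psi^{\CATEsymb}$, squared-error loss $L(a,b)=(a-b)^2$ from \eqref{eq:squared-error-loss}, and linear working model $m_{\bbeta}(X)=\bbeta^\top(1,V)^\top$ from \eqref{eq:linear-working-model}. That formula requires two inputs: a conditional efficient influence function $\Delta^*$ certifying the structural assumption of Result~\ref{theorem:general-eif:short} for the summary, and the first two $\bbeta$-derivatives of the composed loss $\bbeta\mapsto L_m(\PsiP^{\CATEsymb},\bbeta)(X)$. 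Lemma~\ref{lemma:eif-delta-example} supplies the first input: it verifies the structural assumption for $\Psi^{\CATEsymb}$ and identifies $\Delta^*_{\CATEsymb}(P)(O)=\left\{\mathbb{I}(A=1)/g_P(1,X)-\mathbb{I}(A=0)/g_P(0,X)\right\}(Y-\bar{Q}^{(A)}_P(X))$. The remaining regularity conditions \eqref{assumption:implicit-function-theorem}--\eqref{assumption:bounded-derivative} are assumed in the hypothesis, so all hypotheses of Theorem~\ref{theorem:general-eif:full} hold at every $P\in\model$; hence $B^{\CATEsymb}$ is pathwise differentiable there with efficient influence function $D^*(P)=M^{-1}[D_1^*(P)+D_2^*(P)]$ in the notation of that theorem.

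Next I would carry out the substitution. For $L_m(\PsiP^{\CATEsymb},\bbeta)(X)=(\PsiP^{\CATEsymb}(X)-\bbeta^\top(1,V)^\top)^2$ the gradient in $\bbeta$ is $-2(\PsiP^{\CATEsymb}(X)-\bbeta^\top(1,V)^\top)(1,V)^\top$ and the Hessian $\ddot{L}_m(\PsiP^{\CATEsymb},\bbeta)(X)$ is the constant matrix $2(1,V)^\top(1,V)$, independent of both $\bbeta$ and $\PsiP^{\CATEsymb}(X)$. Plugging these, evaluated at $\bbeta=B^{\CATEsymb}(P)$, into the three components of Theorem~\ref{theorem:general-eif:full}, and inserting $\Delta^*_{\CATEsymb}(P)$ from Lemma~\ref{lemma:eif-delta-example} wherever $\Delta^*$ appears: the $D_2^*$ component becomes a multiple of $(\PsiP^{\CATEsymb}(X)-B^{\CATEsymb}(P)^\top(1,V)^\top)(1,V)^\top$; the $X$-measurable factor multiplying $\Delta^*$ in the $D_1^*$ component reduces to a multiple of the vector $(1,V)^\top$, so $D_1^*$ becomes a multiple of $\left\{\mathbb{I}(A=1)/g_P(1,X)-\mathbb{I}(A=0)/g_P(0,X)\right\}(Y-\bar{Q}^{(A)}_P(X))(1,V)^\top$; and $M=-\E_P[\ddot{L}_m(\PsiP^{\CATEsymb},B^{\CATEsymb}(P))(X)]$ is a multiple of $-\E_P[(1,V)^\top(1,V)]$. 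The common scalar coming from the squared-error loss cancels throughout $M^{-1}\{D_1^*+D_2^*\}$, leaving exactly the displayed $D^*_{1,\CATEsymb}$, $D^*_{2,\CATEsymb}$ and $M$; assembling them gives the claim.

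To finish I would record two routine checks. First, $M$ is invertible since it is a nonzero multiple of the second-moment matrix of $(1,V)$, whose nonsingularity is the identifiability condition underlying uniqueness of the minimizer defining $B^{\CATEsymb}(P)$ via \eqref{eq:optimization-problem}. Second, $D^*_{\CATEsymb}(P)\in(L_0^2(P))^p$: square-integrability follows from positivity (Assumption~\ref{assumption-positivity}), which keeps $g_P(a,X)$ bounded away from $0$, together with boundedness of $Y$; and $D^*_{\CATEsymb}(P)$ is mean-zero because $\E_P[D^*_{1,\CATEsymb}(P)(O)\mid X]=(1,V)^\top\,\E_P[\Delta^*_{\CATEsymb}(P)(O)\mid X]=0$, the inner conditional expectation vanishing since $\E_P[Y-\bar{Q}^{(A)}_P(X)\mid A,X]=0$, while $\E_P[D^*_{2,\CATEsymb}(P)(X)]=0$ is the first-order optimality condition for the minimizer defining $B^{\CATEsymb}(P)$ in \eqref{eq:optimization-problem}. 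The only genuine friction is bookkeeping: tracking the paper's conventions for $\dot{L}$, $\ddot{L}_m$ and the gradient operator in Theorem~\ref{theorem:general-eif:full} so that the scaling of the squared-error loss is carried consistently through $D_1^*$, $D_2^*$ and $M$ and cancels in $M^{-1}\{D_1^*+D_2^*\}$; once those are pinned down the computation is mechanical, and it invokes no analytic estimate beyond those already used to prove Theorem~\ref{theorem:general-eif:full}.
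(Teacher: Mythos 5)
Your proposal is correct and follows essentially the same route as the paper: verify Assumption~\eqref{assumption:derivatives-exist} of Theorem~\ref{theorem:general-eif:full} by computing $\dot{L}_m$, $\ddot{L}_m$ and $\gradient{}\dot{L}_m$ for the squared-error loss and linear working model, take Assumptions~\eqref{assumption:implicit-function-theorem}--\eqref{assumption:bounded-derivative} from the hypothesis, invoke Lemma~\ref{lemma:eif-delta-example} for Assumption~\eqref{assumption:delta} and the form of $\Delta^*_{\CATEsymb}$, and substitute into the general formula. Your added remarks on the cancellation of the loss-function scalar and on $D^*_{\CATEsymb}(P)$ being mean-zero and square-integrable are elaborations the paper leaves implicit, not a different argument.
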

The proof is provided in Appendix \ref{proof:eif-example}. 

\begin{paragraph}{Remark} The form of the EIF resembles closely that of the Average Treatment Effect (ATE). In fact, when $V = \varnothing$ then $D_\CATEsymb^*(P)$ reduces to the EIF of the ATE:
\begin{align}
    D^*_{1,\CATEsymb}(P)(X, A, Y) &= \left\{ \frac{I(A = 1)}{g_P(1, X)} - \frac{I(A = 0)}{g_P(0, X)} \right\} (Y - \bar{Q}^{(A)}_P(X) ), \\
    D^*_{2,\CATEsymb}(P)(X) &= \Psi^{\CATEsymb}_P(X) - \E_P\left[ \Psi^{\CATEsymb}_P(X) \right].
\end{align}
\end{paragraph}

\begin{paragraph}{Remark}
The variance of the EIF is given by 
\begin{align}
    \E_P[\lambda^*(P)(O)] &= M^{-1} \E_P\left[ \left( \frac{\mathrm{Var}(Y \mid A, X)}{g_P(A, X)^2} + \left(\Psi_P^\CATEsymb(X) - B^\CATEsymb(P)^\top (1, V)^\top \right)^2 \right) (1, V)(1,V)^\top\right] M^{-1}.
\end{align}
\end{paragraph}

\end{example}

\section{Targeted Minimum Loss-Based Estimation}
\label{section:tmle}

In this section, we describe an efficient  and asymptotically normal estimator for $\bbeta_0$ based on Targeted Minimum Loss-Based Estimation (TMLE;  \cite{van2011targeted, van2018targeted}).

\paragraph{Notation} The target parameter $B(P)$ and the efficient influence function $D^*(P)$ of $B$ at $P$ only depend on $P$ through several nuisance parameters. As we will be only estimating these nuisance parameters, rather than all of $P$, it is helpful to introduce notation that emphasizes the parts of $P$ necessary for estimating the target parameter and efficient influence function.
\begin{itemize}
    \item Nuisance parameters for $B(P)$: note that $B(P)$ depends on $P$ only through (a) the marginal distribution of $X$ under $P$, which we call $Q_P$, and (b) the functional $\Psi_P$, which itself only depends on $\bar{Q}_P$ and possibly $Q_P$. For convenience, we will write $B(P)$ and $B(\bar{Q}_P, Q_{P})$ interchangeably. 
    \item Nuisance parameters for $D^*(P)$: recall that the EIF $D^*(P)$ is the sum of $D_1^*(P)$ and $D_2^*(P)$. The first component $D_1^*(P)$ may depend on $P$ through $\Psi_P$, $Q_{P}$, and additional nuisance parameters we call $\eta_P$.  The function $D_2^*(P)$ only depends on $P$ through $\Psi_P$ and $Q_{P}$. As such, we will write $D_1^*(\bar{Q}_P, Q_{P}, \eta_P)$, $D_2^*(\bar{Q}_P, Q_{P})$, and $D^*(\bar{Q}_P, Q_{P}, \eta_P)$ interchangeably with $D_1^*(P)$, $D_2^*(P)$, and $D^*(P)$. 
\end{itemize} 
For convenience, let  $\mathcal{Q} = \{ Q_{P} : P \in \mathcal{M} \}$ denote the parameter space of $Q_{P}$, denote $Q_{0} := Q_{P_0}$, and define $\eta_0 := \eta_{P_0}$.

Suppose we have initial estimates $\bar{Q}_n^0 \in \mathcal{Q}$, $Q_{n}^0$, and $\eta_n$ of $\bar{Q}_0$, $Q_{0}$, and $\eta_0$. From now on, we will always choose $Q_{n}^0$ to be the empirical distribution of $X_1, \dots, X_n$, but we do not yet choose a particular estimating strategy for $\bar{Q}_n^0$ and $\eta_n$. Given these initial estimates, we could estimate $\bbeta_0$ via the plug-in estimator $B(\bar{Q}_n^0, Q_{n}^0)$ (which is indeed well-defined). However, although $\bar{Q}_n^0$ and $Q_{n}^0$ may be good estimators of $\bar{Q}_0$ and $Q_{0}$, there is no guarantee that plugging them into the target parameter yields a good estimator of $\bbeta_0$. The idea of TMLE is to reduce the bias of the plug-in estimator by iteratively updating the initial estimates $\bar{Q}_n^0$ and $Q_{n}^0$ so as to target the parameter of interest $\bbeta_0$. Here, the iterative updates may incorporate the additional nuisance parameters~$\eta_n$. Given mild conditions on $\bar{Q}_n^0$ and $\eta_n$ the resulting targeted estimator is asympotically normal and efficient.

Now we describe in more detail the updating procedure at the heart of TMLE. The iterative updates are based on fluctuations of the current estimators of $\bar{Q}_0$ and $Q_{0}$. First, choose loss functions $\tmleLoss_j : \reals \times \mathcal{O} \to \reals$ corresponding to each component  $\bar{Q}^{(j)}_P$ of $\bar{Q}_P = \left( \bar{Q}^{(0)}_P, \dots, \bar{Q}^{(J)}_P \right)$ such that the following conditions are satisfied for all $j = 0, \dots, J$:
\begin{enumerate}[label={(L\arabic*)}]
    \item \label{condition: well-defined-loss} $\bar{Q}^{(j)}_0 = \argmin \left\{ \E_{P_0}[\tmleLoss_j(\bar{Q}_j(X), O)] : \bar{Q}_j \in \{ \bar{Q}^{(j)}_{P} : P \in \mathcal{M} \}\right\}$
    \item \label{condition:loss-differentiable} For each $P \in \model$, it holds $P$-almost surely that $t' \mapsto \tmleLoss_j(t', O)$ is differentiable at every $t \in \reals$ with derivative $\dot{\tmleLoss}_j(t, O) \in \reals$.
\end{enumerate}
Next, introduce a parametric fluctuation model satisfying the following conditions:
\begin{enumerate}[label={(M\arabic*)}]
    \item \label{condition:well-defined-fluctuation} For any $P \in \model$ with corresponding $\bar{Q}_P$, $Q_{P}$, and $\eta_P$, we can define a fluctuation model
    \begin{align}
        \left\{ ( \QbarFluctuation{P}{\bepsilon}^{(0)}, \dots, \QbarFluctuation{P}{\bepsilon}^{(J)}, \QFluctuation{P}{\bepsilon} )  : \bepsilon \in \mathbb{R}^p \right\}
    \end{align}
    such that:
    \begin{enumerate}
        \item If $\bepsilon = \bzero$, then $\QbarFluctuation{P}{\bepsilon}^{(j)} = \bar{Q}_P^{(j)}$ for all $j = 0, \dots, J$ and $\QFluctuation{P}{\bepsilon} = Q_{P}$.
        \item It holds $P$-almost surely that the mappings $\bepsilon \mapsto \tmleLoss_j(\QbarFluctuation{P}{\bepsilon}^{(j)}(X), O)$ ($j = 0, \dots, J)$ and $\bepsilon \mapsto -\log Q_{P,\bepsilon}(X)$ are differentiable at $\bepsilon = \bzero$, and $D^*(P)(O)$ belongs to
        \begin{align}
            \label{eq:condition-submodel-gradient}
            \mathrm{Span} \left( 
                \frac{\partial}{\partial \bepsilon} \tmleLoss_0(\QbarFluctuation{P}{\bepsilon}^{(0)}(X), O) \evalat{\bepsilon = \bzero}, 
                \dots, \frac{\partial}{\partial \bepsilon} \tmleLoss_J(\QbarFluctuation{P}{\bepsilon}^{(J)}(X), O) \evalat{\bepsilon = \bzero}, 
                \frac{\partial}{\partial \bepsilon} \left(-\log \QFluctuation{P}{\bepsilon}\right)(X) \evalat{\bepsilon = \bzero} 
            \right).
        \end{align}
    \end{enumerate}
\end{enumerate}
Given loss functions and a fluctuation model satisfying the above conditions, conduct the following iterative procedure:
\begin{enumerate}
    \item Start with the initial estimates $\bar{Q}_n^{(j), 0}$ ($j = 0, \dots, J$), $Q_{n}^0$, $\eta_n$. 
    \item For $k \geq 1$, recursively let $\bar{Q}_n^{(j), k} = \QbarFluctuation{n}{\bepsilon_n^k}^{(j), k-1}$  ($j = 0, \dots, J$) and $Q_{n}^k = \QFluctuation{n}{\bepsilon_n^k}^{k-1}$, where 
    \begin{align}
        \bepsilon_n^k = \argmin_{\bepsilon \in \reals^p} \frac{1}{n}\sum_{i=1}^n \left[ \sum_{j = 0}^J \left[ \tmleLoss_j(\QbarFluctuation{n}{\bepsilon}^{(j), k-1}(X_i), O_i) \right] - \log \QFluctuation{n}{\bepsilon}^{k-1}(X_i) \right],
    \end{align}
    and stop when $\norm{\bepsilon^k_n} \approx 0$ (rigorously, when $\bepsilon^k_n = o_P(n^{-1/2})$).
    \item Set $\bar{Q}_n^* = \left( \bar{Q}_n^{(0), k}, \dots, \bar{Q}_n^{(J), k} \right)$ and $Q_{n}^* = Q_{n}^k$, where $k$ is the final iteration of the above step. 
\end{enumerate}
The targeted estimator is the plug-in estimator given by $\tmleBeta = B(\bar{Q}_n^*, Q_{n}^*)$. The desirable statistical properties enjoyed by this estimator derive from the fact that the final update approximately solves the estimating equation $P_n\left[ D^*(\bar{Q}_n^*, Q_{n}^*, \eta_n) \right] = o_P(n^{-1/2})$.\footnote{From now on we will write $P[f]$ for the integral $\int f dP$.} The following theorem states conditions under which $\bbeta_n^*$ is asymptotically normal and efficient, which serves as the basis for conducting valid statistical inference.
\begin{theorem}[Asymptotic normality and efficiency of $\bbeta_n^*$]
\label{theorem:asymptotic-normality}
Let us assume that 
\begin{enumerate}
    \item Rate of convergence of second-order remainder: $\tmleBeta - \bbeta_0 + P_0 D^*(P_n^*) = o_P(n^{-1/2})$.
    \item Donsker conditions: $D^*(P_n^*)$ is in a $P_0$-Donsker class with probability tending to one, and the random squared norm $P_0[(D^*(P_n^*) - D^*(P_0))^2] = o_P(n^{-1/2})$.
\end{enumerate}
Then the estimator $\tmleBeta$ is asymptotically linear with the form \begin{align}
  \sqrt{n}\left(\tmleBeta - \bbeta_0 \right) = \frac{1}{\sqrt{n}} \sum_{i=1}^n D^*(P_0)(O_i) + o_P(1),
\end{align}
which implies that $\tmleBeta$ is asymptotically normal and efficient: \begin{align}
    \sqrt{n}\left(\tmleBeta - \bbeta_0 \right) \rightsquigarrow N\left(0, P_0[\lambda^*(P_0)] \right).
\end{align}
\end{theorem}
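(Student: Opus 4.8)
The plan is the standard TMLE asymptotic-linearity argument: assemble three ingredients into the claimed expansion and then invoke the multivariate CLT. The ingredients are (i) the fact recalled just before the theorem that the final targeted law empirically solves the efficient influence function equation, $P_n[D^*(P_n^*)] = o_P(n^{-1/2})$ (here $P_n^*$ is shorthand for the nuisance tuple $(\bar{Q}_n^*, Q_n^*, \eta_n)$); (ii) the second-order remainder condition $\tmleBeta - \bbeta_0 + P_0 D^*(P_n^*) = o_P(n^{-1/2})$; and (iii) an empirical-process equicontinuity argument fed by the Donsker condition. Efficiency will then be read off by identifying the limiting variance with the bound $P_0[\lambda^*(P_0)]$ of Section~\ref{section:efficiency-theory}.

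The first step is the algebraic decomposition. Writing $G_n := \sqrt n(P_n - P_0)$ for the empirical process, I would start from
\begin{align}
  \tmleBeta - \bbeta_0
  &= \bigl(\tmleBeta - \bbeta_0 + P_0 D^*(P_n^*)\bigr) - P_0 D^*(P_n^*) \\
  &= \bigl(\tmleBeta - \bbeta_0 + P_0 D^*(P_n^*)\bigr) + (P_n - P_0) D^*(P_n^*) - P_n D^*(P_n^*).
\end{align}
The first bracket is $o_P(n^{-1/2})$ by the second-order remainder condition, and $P_n D^*(P_n^*) = o_P(n^{-1/2})$ by ingredient (i), so this collapses to $\tmleBeta - \bbeta_0 = (P_n - P_0) D^*(P_n^*) + o_P(n^{-1/2})$.

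The second step is to swap the random integrand $D^*(P_n^*)$ for the fixed function $D^*(P_0)$ at negligible cost, i.e. to show $G_n\bigl(D^*(P_n^*) - D^*(P_0)\bigr) = o_P(1)$ (coordinatewise). This is exactly the situation covered by the stochastic equicontinuity lemma for Donsker classes \citep[e.g.][Lemma~19.24]{vandervaart2000asymptotic}: since $D^*(P_n^*)$ lies in a fixed $P_0$-Donsker class with probability tending to one and $P_0[(D^*(P_n^*) - D^*(P_0))^2] = o_P(1)$ (part of the Donsker condition), the empirical process evaluated along this random sequence is asymptotically equicontinuous. Combining with the first step and using $P_0 D^*(P_0) = 0$ (because $D^*(P_0) \in (L_0^2(P_0))^p$), I obtain
\begin{align}
  \sqrt n\bigl(\tmleBeta - \bbeta_0\bigr) = G_n D^*(P_0) + o_P(1) = \frac{1}{\sqrt n}\sum_{i=1}^n D^*(P_0)(O_i) + o_P(1),
\end{align}
which is the asserted asymptotic linearity. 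Since $D^*(P_0) \in (L_0^2(P_0))^p$ is square-integrable with mean zero, the multivariate CLT gives $\frac{1}{\sqrt n}\sum_{i=1}^n D^*(P_0)(O_i) \rightsquigarrow N\bigl(0, P_0[D^*(P_0)D^*(P_0)^\top]\bigr) = N\bigl(0, P_0[\lambda^*(P_0)]\bigr)$, and Slutsky's lemma transfers this to $\sqrt n(\tmleBeta - \bbeta_0)$. Efficiency is immediate because this asymptotic variance equals the semi-parametric efficiency bound of Section~\ref{section:efficiency-theory} (equivalently, the influence function is the efficient influence function).

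I expect the main obstacle to be the equicontinuity step (ii): one must invoke the Donsker lemma carefully given that the Donsker class and the $L^2$-consistency statement are themselves random (holding only with probability tending to one), and the argument is vector-valued. A secondary point worth writing out explicitly is that ingredient (i) is not an assumption but a consequence of the construction — it follows from the stopping rule $\bepsilon_n^k = o_P(n^{-1/2})$ together with the span condition in~\ref{condition:well-defined-fluctuation} (which forces $D^*(P_n^*)$ into the linear span of the fluctuation scores) and the differentiability condition~\ref{condition:loss-differentiable}, via a first-order expansion of the empirical risk in $\bepsilon$ around $\bzero$.
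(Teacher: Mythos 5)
Your proposal is correct and is essentially the paper's own proof: the same four-way decomposition into a bias term ($P_n D^*(P_n^*)$, negligible by construction of the fluctuation), a CLT term, an empirical-process term controlled via \citet[Lemma 19.24]{vandervaart2000asymptotic} under the Donsker condition, and the assumed second-order remainder. The only differences are cosmetic (you assemble the identity in a slightly different order and spell out why the bias term is a consequence of the stopping rule rather than an assumption, which the paper states more tersely).
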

The proof is given in Appendix \ref{proof:tmle-asymptotic-normality}. Note that the Donsker conditions can be relaxed through the use of cross-validation \citep{zheng2010asymptotic}.

So far we have not given an explicit form for the loss functions $\tmleLoss_0, \dots, \tmleLoss_{J}$  nor the fluctuation model. The explicit forms of the $\tmleLoss_0, \dots, \tmleLoss_J$ will depend on the structure of $O$ and the form of the functional summary $\PsiP$. Likewise, we cannot give a complete specification of the fluctuation model as it too depends on the choices for $\tmleLoss_0, \dots, \tmleLoss_J$ and the form of $\PsiP$. In addition, there may be multiple fluctuation models that satisfy the required conditions. We give a blueprint for one particular fluctuation model satisfying the required conditions that can be easily adapted to specific problems. This blueprint also yields a useful constraint on the form of the $\tmleLoss_0, \dots, \tmleLoss_J$. 

For any $P \in \model$ with corresponding $\PsiP$, $\bar{Q}_P$, $Q_{P}$, and $\eta_P$, characterize the fluctuation model by setting
\begin{align}
    \label{eq:blueprint-1}
    \QbarFluctuation{P}{\bepsilon}^{(j)}(O) &= \phi^{-1}\left(\phi(\bar{Q}_P^{(j)}(O)) + \cleverH_j(O)\bepsilon^\top \gradient{} \dot{L}_m(\PsiP, B(P))(X)\right) \quad \text{for each } j = 0, \dots, J, \\
    \label{eq:blueprint-2}
    \text{and } \quad \QFluctuation{P}{\bepsilon}(X) &= C(\bepsilon) \exp\left( \bepsilon^\top \dot{L}_m(\PsiP, B(P))(X) \right) Q_P(X),
\end{align}
where the so-called ``clever-covariates", which need to be chosen, are functions $\cleverH_j : \mathcal{O} \to \reals$ ($j =0, \dots, J$) and the constant $C(\bepsilon)$ is chosen such that $Q_{P, \bepsilon}$ is well-defined. We also assume that the transformation $\mathrm{link}$ has an inverse $\phi^{-1}$, and that this inverse has derivative $\dot{\phi}^{-1}$. It is easy (sic) to see that $\QbarFluctuation{P}{\bzero}^{(j)} = \bar{Q}_P^{(j)}$ ($j = 0, \dots, J$) and $\QFluctuation{P}{\bzero} = Q_P$, so (M1a) is satisfied. Next we derive a constraint under which condition (M1b) is satisfied. First, see that 
\begin{align}
    \frac{\partial}{\partial \epsilon} \log \QFluctuation{P}{\bepsilon}(X) \evalat{\bepsilon = \bzero} &= \dot{L}_m(\PsiP, B(P))(X) \\
    &= D_2^*(\PsiP, Q_P)(X).
\end{align}
In addition, by condition \ref{condition: well-defined-loss} we can compute
\begin{align}
    \frac{\partial}{\partial \bepsilon} \tmleLoss_j(\QbarFluctuation{P}{\bepsilon}^{(j)}(O), O) \evalat{\bepsilon = \bzero} &=  \dot{\tmleLoss}_j(\QbarFluctuation{P}{\bepsilon}^{(j)}(O), O) \evalat{\bepsilon = \bzero} \frac{\partial}{\partial \bepsilon} \QbarFluctuation{P}{\bepsilon}^{(j)}(O) \evalat{\bepsilon = \bzero} \\
    &= \dot{\tmleLoss}_j(\bar{Q}_P^{(j)}(O), O) \times \dot{g}^{-1}(g(\bar{Q}_P^{(j)}(O)) \times H_j(O) \times \gradient{} \dot{L}_m(\PsiP, B(P))(X) .
\end{align}
Recall that condition (M1b) requires that $D^*(P)(O)$  be included in the linear span of the two gradients from the two above displays. Therefore, the only way for (M2b) to be satisfied is if 
\begin{align}
    \sum_{j = 0}^J \dot{\tmleLoss}_j(\bar{Q}_P^{(j)}(O), O) \dot{\phi}^{-1}(\phi(\bar{Q}_P^{(j)}(O))) H_j(O) \propto \Delta^*(P)(O).
\end{align} In practice, $\Delta^*$ often includes a term resembling a residual. The loss functions $\tmleLoss_j$ ($j = 0, \dots, J$) are then chosen such that their summed derivatives equals the residual term, and the remaining part of $\Delta^*$ is integrated into the $H_j$ ($j = 0, \dots, J$).

\begin{example}{Example (cont'd)}
Now that we are working in the context of a particular example we can fill in the details of the targeted estimator. First, note that $\Psi^{\CATEsymb}_P$ depends on $P$ only through the parameters $\bar{Q}_P = (\bar{Q}_P^{(0)}, \bar{Q}_P^{(1)})$, and $D_{1,\CATEsymb}^*(P)$ depends on $P$ through the additional nuisance parameters $\eta_P(X) = (g_P(0, X), g_P(1, X))$. 
For any $P \in \model$ with corresponding $\PsiP$, $\bar{Q}_P$, $Q_{P}$ and $\eta_P$, characterize the fluctuation model as follows, using the transformation $\phi=\mathrm{logit}$:
\begin{align}
    \mathrm{logit}\left(\QbarFluctuation{P}{\bepsilon}^{(0)}(X)\right) &= \mathrm{logit}\left(\bar{Q}_P^{(0)}(X)\right) + \cleverH_0(O) \bepsilon^\top (1, V)^\top , \\
    \mathrm{logit}\left(\QbarFluctuation{P}{\bepsilon}^{(1)}(X)\right) &= \mathrm{logit}\left(\bar{Q}_P^{(1)}(X)\right) + \cleverH_1(O) \bepsilon^\top (1, V)^\top , \\
    \QFluctuation{P}{\bepsilon}(X) &= C(\bepsilon) \exp\left( \bepsilon^\top  (\psi^{\CATEsymb}_P(X) - B(P)^\top (1, V)^\top) (1, V)^\top \right) Q_X(X).
\end{align}
The clever covariates are given by
\begin{align}
    \cleverH_0(O) &= -\frac{I(A = 0)}{g_P(0, X)}, \\
    \cleverH_1(O) &= \phantom{-}\frac{I(A = 1)}{g_P(1, X)}.
\end{align}
Note that the fluctuations are quite similar to those commonly adopted to define a TMLE for the ATE \citep[Chapter 5]{van2011targeted}. The loss functions are chosen to be characterized by
\begin{align}
    \tmleLoss_0(t, O)   &= -I(A = 0)\left[ Y \log\left(t\right) + (1 - Y)\log\left(1 - t\right) \right], \\
    \tmleLoss_1(t, O)   &= -I(A = 1)\left[ Y \log\left(t\right) + (1 - Y)\log(1 - t) \right]
\end{align}
(both $t \in (0, 1)$).
To ensure this is a valid setup, we need to check conditions (L1), (L2), and (M1). \ref{condition: well-defined-loss} is satisfied for this choice of log-likelihood loss function \citep{van2011targeted}. The derivatives of the loss functions are 
\begin{align}
    \dot{\tmleLoss}_0(t, O) &= -I(A = 0)\left[ \frac{Y - t}{(1 - t)t} \right], \\
    \dot{\tmleLoss}_1(t, O) &= -I(A = 1)\left[ \frac{Y - t}{(1 - t)t} \right]
\end{align}
(both $t \in (0, 1)$).
Therefore, \ref{condition:loss-differentiable} is satisfied. To check \ref{condition:well-defined-fluctuation}, see that, for $\phi = \mathrm{logit}$, $\dot{\phi}^{-1}(\phi(t)) = (1 - t) t$.
Therefore
\begin{align}
    &\dot{\tmleLoss}_0(\bar{Q}_P^{(0)}(X), O) \dot{\phi}^{-1}(\phi(\bar{Q}_P^{(0)}(X))) \cleverH_0(O) +  \dot{\tmleLoss}_1(\bar{Q}_P^{(1)}(X), O) \dot{\phi}^{-1}(\phi(\bar{Q}_P^{(1)}(X))) \cleverH_1(O) \\
    &= -\left\{ \frac{I(A = 1)}{g_P(1, X)} - \frac{I(A = 0)}{g_P(0, X)} \right\} (Y - \bar{Q}_P^{(A)}(X)) = -\Delta_P^*(X),
\end{align}
which shows that \ref{condition:well-defined-fluctuation} is satisfied. 
\end{example}


\section{Bayesian Inference}
\label{section:bayesian-tmle}
The key observation behind Bayesian Targeted Maximum Likelihood Estimation is that when the TMLE loss functions can be interpreted as log-likelihoods, then the optimization step in the iterative procedure of TMLE simply corresponds to maximum likelihood estimation of $\bepsilon$. Furthermore, due to the likelihood interpretation, we can also use Bayesian inference to estimate $\bepsilon$ via a straightforward application of Bayes' theorem. In this section, we formalize this application of Bayesian inference, and present an oracle Bernstein von-Mises type result that suggests the resulting Bayesian targeted estimator converges asymptotically to the truth with optimal variance given by the variance of the efficient influence function. By oracle result, we mean that the Berstein von-Mises theorem concerns an idealized version of the Bayesian targeted estimator that we define below.

Suppose a set of loss functions $\mathcal{L}_j$, $j = 0, \dots, J$ has been chosen such that conditions \ref{condition: well-defined-loss} and \ref{condition:loss-differentiable} are satisfied. In addition, suppose we have already chosen for any $P \in \model$ with corresponding $\bar{Q}_P$, $Q_P$, and $\eta_P$ a well-defined fluctuation submodel
 \begin{align}
    \left\{ ( \QbarFluctuation{P}{\bepsilon}^{(0)}, \dots, \QbarFluctuation{P}{\bepsilon}^{(J)}, \QFluctuation{P}{\bepsilon} )  : \bepsilon \in \mathbb{R}^p \right\}
\end{align}
satisfying condition \ref{condition:well-defined-fluctuation}.  In this section, in order to reframe Targeted Maximum Likelihood Estimation in a Bayesian framework, we focus on the case where it is possible, for any $P \in \model$, to define an additional nuisance parameter $\sigma_P$ and a submodel $ \mathcal{F} = \{ F_{\bepsilon} : \bepsilon \in \reals^p \} \subset \model $ such that the log-likelihood of $O$ under each $F_{\bepsilon}$ writes as
\begin{align}
    \label{eq-likelihood-condition}
    \log f(O \mid \bepsilon) = h_1(\sigma_P(O)) - h_2(\sigma_P(O)) \sum_{j=1}^J \mathcal{L}_j(\bar{Q}_{P,\bepsilon}^{(j)}(O), O) + \log Q_{P,\bepsilon}(X) + \text{constant},
\end{align}
where $h_1$ and $h_2$ are transformations of the nuisance parameter $\sigma_P$. It is often possible to construct such $F_{\bepsilon}$ satisfying the above condition by choosing a conditional probability distribution for $Y$ given $X$ with a conditional log-likelihood resembling the TMLE loss functions. 

\begin{example}{Example (cont'd)}
The TMLE loss functions $\mathcal{L}_0$ and $\mathcal{L}_1$ in this example can be interpreted as the negative conditional log-likelihoods of $Y$ under Bernoulli distributions. This suggests constructing $F_{\bepsilon}$ such that $Y$ given $A$ and $X$ follows a Bernoulli distribution.
Fix arbitrarily $P \in \model$ and let $F_{\bepsilon}$ be characterized by
\begin{align}
    X &\sim Q_{P, \bepsilon}, \\
    A \mid X &\sim \mathrm{Bernoulli}(1/2), \\
    Y \mid X, A &\sim \mathrm{Bernoulli}\left(\bar{Q}_{P, \bepsilon}^{(A)}(X)\right),
\end{align}
where
\begin{align}
    Q_{P,\bepsilon}(X) &= C(\bepsilon) \exp\left( M^{-1} \bepsilon^\top (\Psi^{\MCOsymb}_P(X) - B(P)^\top (1, V)^\top)(1, V)^\top \right) Q_X(X), \\
    \mathrm{logit}(\bar{Q}_{P, \bepsilon}^{(0)}(X)) &= \mathrm{logit}(\bar{Q}_P^{(0)}(X)) + H_0(O) \bepsilon^\top M^{-1} (1, V)^\top, \\
    \mathrm{logit}(\bar{Q}_{P, \bepsilon}^{(1)}(X)) &= \mathrm{logit}(\bar{Q}_P^{(1)}(X)) + H_1(O)  \bepsilon^\top M^{-1} (1, V)^\top,
\end{align}
with $M = -\E_P[(1, V)^\top (1, V) ]$.
The only difference between these fluctuations and those used for the frequentist TMLE are in the inclusion of the normalizing matrix $M$, which is included so that the conditions of the forthcoming Bernstein von-Mises theorem are satisfied. The conditional distribution of $A$ is included so that the model is fully specified, although as in the frequentist TMLE this conditional distribution is not fluctuated. No additional nuisance parameters $\sigma_P$ are needed in this example; as such, we set $h_1(\cdot) = 0$ and $h_2(\cdot) = 1$ (see Appendix \ref{section:supplementary-example} for an example where $\sigma_P$ is non-empty). The conditional log-likelihood of $O$ under $F_{\bepsilon}$ is then given by
\begin{align}
    \log f\left(O \middle| \bepsilon \right) &= Y \log\left( \bar{Q}_{P,\bepsilon}^{(A)}(X) \right) + (1 - Y)\log\left( 1 -  \bar{Q}_{P,\bepsilon}^{(A)}(X) \right) + \log Q_{P,\bepsilon}(X) + \mathrm{constant} \\
    &= \left( \mathcal{L}_0\left(\bar{Q}_{P,\bepsilon}^{(0)}(X), O\right) + \mathcal{L}_0\left(\bar{Q}_{P,\bepsilon}^{(0)}(X), O\right) \right) + \log Q_{P,\bepsilon}(X) + \mathrm{constant},
\end{align}
which satisfies \eqref{eq-likelihood-condition}.
\end{example}

Next, we discuss Bayesian estimation of the parameter $\bepsilon$ within the model $\mathcal{F}=\{F_{\bepsilon} : \bepsilon \in \mathbb{R}^{p}\}$. Let $\pi_{\epsilon}$ be a prior distribution for $\bepsilon$. Fix arbitrarily $P \in \model$, with corresponding $\bar{Q}_P$, $Q_P$, $\eta_P$, and $\sigma_P$. 
Application of Bayes' theorem yields a posterior distribution of $\bepsilon$ given by
\begin{align*}
    \Pi_{\epsilon}(\bepsilon | O_{1:n}, \bar{Q}_P, Q_P, \eta_P, \sigma_P) \propto \pi_{\epsilon}(\bepsilon)  \prod_{i=1}^n f(O_i \mid \bepsilon, \bar{Q}_P, Q_P, \eta_P, \sigma_P).
\end{align*}
We can then find a posterior distribution for $\bbeta$, the object of interest, by mapping the posterior of $\bepsilon$ onto a posterior for $\bbeta$.
For convenience, we write $\vartheta : \bepsilon \mapsto B(F_{\bepsilon})$.  The posterior distribution $\Pi_\beta$ of $\bbeta$ is then the image of $\Pi_{\epsilon}$ under $\vartheta$.

We are unlikely to have any prior information for $\bepsilon$ directly. On the contrary, we might have prior information for $\bbeta$. As such, we set a prior on $\bbeta$, which we then map back to a prior on $\bepsilon$. If $\vartheta$ has an inverse and if $\vartheta$ is differentiable with derivative $\dot{\vartheta} \in \reals^{p \times p}$, then a prior distribution $\pi_{\beta}$ for $\bbeta$ is mapped to a prior distribution $\pi_{\epsilon}$ on $\bepsilon$ by the formula
\begin{align}
    \label{eq-epsilon-prior}
    \pi_{\epsilon}(\bepsilon) = \pi_{\bbeta}\left( \vartheta(\bepsilon) \right) \left|\mathrm{det}\left(\dot{\vartheta}(\bepsilon )\right)\right|.
\end{align}

Next, we present a Bernstein von-Mises type theorem for an oracular fluctuation model that is built using the \textbf{true} nuisance parameters under $P_0$. The proof shows that in this setting, the Bayesian TMLE converges to the truth with variance given by the variance of the efficient influence function. Denote by $\mathcal{F}^0 = \{ F_{\bepsilon}^0 : \bepsilon \in \reals^p \}$ the submodel through $P_0$ satisfying \eqref{eq-likelihood-condition}. 
The following theorem only holds if the form of $F_{\bepsilon}^0$ satisfies certain conditions, one of which being that $\mathcal{F}^{0}$ be \textit{locally asymptotically normal}. We provide a definition for this property in Appendix~\ref{section:bvm-proof}, where we also state and prove Theorem~\ref{theorem:bvm:full}, the full version of the result below.

Let $N(\mu, \Sigma)$ denote the multivariate normal distribution with mean vector $\mu$ and covariance matrix~$\Sigma$. Let
\begin{align}
    \Pi_{\sqrt{n}(\beta-\beta_{0})}^{0}\left(\cdot \middle| O_{1:n} \right) &:= \Pi_{\sqrt{n}(\beta-\beta_{0})}\left(\cdot \middle| O_{1:n}, \bar{Q}_0, Q_0, \eta_0, \sigma_0 \right)
\end{align}
be the posterior for $\sqrt{n}(\bbeta - \bbeta_{0})$ corresponding to the submodel $\mathcal{F}^0$. 

\begin{result}[Oracular Bernstein von-Mises]
\label{theorem:bvm:short}
    Under assumptions stated in Theorem~\ref{theorem:bvm:full}, 
    \begin{align*}
        \norm{\Pi^0_{\sqrt{n}(\bbeta - \bbeta_0)} \left( \cdot \mid O_{1:n} \right) - N(\Delta_n^0, P_0[\lambda^*(P_0)])}_1 = o_P(1)
    \end{align*}
    where
    \begin{align}
        \Delta_{n}^0 = \frac{1}{\sqrt{n}} \sum_{i=1}^n P_0[\lambda^*(P_0)]^{-1} D^*(P_0)(O_i).
    \end{align}
\end{result}
The full statement (Theorem~\ref{theorem:bvm:full}) and its proof are given in Appendix~\ref{section:bvm-proof}.

\begin{example}{Example (cont'd)} 
  We also show in Appendix~\ref{section:bvm-proof} that the above result applies to the running example.
\end{example}

So far we have defined a family of posterior distributions of $\bbeta$ indexed by $\bar{Q}_P$, $Q_P$, $\eta_P$, and $\sigma_P$. The oracular Bernstein von-Mises proof shows that, for a fluctuation $\mathcal{F}^0$ built using the true nuisance parameters $\bar{Q}_0, Q_0, \eta_0$, and $\sigma_0$, the posterior distribution of $\bepsilon$ converges to the truth with variance given by the variance of the efficient influence function. Since in practice we have to estimate the values of these nuisance parameters, we turn to a closely related posterior from the full family of posterior distributions, the one indexed by, and conditional on, the estimates $\bar{Q}_n^*, Q_n^*$, $\eta_n$, and $\sigma_n$ --- recall that  $\bar{Q}_n^*, Q_n^*$ are the final fluctuated estimates of $\bar{Q}_0$ and $Q_0$ from the frequentist TMLE, $\eta_n$ is an estimate of the nuisance parameter $\eta$, and the newly introduced $\sigma_n$ is an estimate of $\sigma_{P_0}$. We call this posterior a \textit{targeted posterior}, and we 
hope that it inherits favorable properties from the frequentist TMLE. 

Formally, define
\begin{align}
    \label{eq-targeted-posterior2}
    \Pi^*_{\sqrt{n}(\beta-\beta_{0})}(\cdot | O_{1:n}) &:= \Pi_{\sqrt{n}(\beta-\beta_{0})}(\cdot | O_{1:n}, \bar{Q}_n^*, Q_n^*, \eta_n, \sigma_n)
\end{align}
to be the targeted
targeted posterior of $\sqrt{n}(\bbeta-\bbeta_{0})$. Under an assumption that the estimates $\bar{Q}_n^*, Q_n^*, \eta_n$, and $\sigma_n$ are consistent, it is reasonable to think that this posterior will converge to $\Pi_{\sqrt{n}(\beta-\beta_{0})}^0(\cdot \mid O_{1:n})$. By appealing to Result~\ref{theorem:bvm:short} (that is, Theorem \ref{theorem:bvm:full}), it would therefore follow that the posterior converges to a distribution centered on the MLE and with optimal asymptotic variance given by the variance of the efficient influence function. While we leave a formal proof of this convergence to future work, we conduct a simulation study in a later section to investigate the properties of $\Pi_{\beta}^*$ in a finite-sample context.

\section{Computation}
\label{section:computation}
Putting the proposed estimators into practice requires a software implementation. The frequentist and Bayesian estimators have been implemented in the software package \texttt{TargetedMSM.jl} in the Julia programming language \citep{Julia-2017}.

\subsection{Universal Algorithm}
The efficient influence function $D^*(P)$ depends on four components: the function $\Delta^*(P)$ and the derivatives $\dot{L}$, $\ddot{L}$, and $\gradient{}\dot{L}$. Each of the derivatives depends on the form of the working model and loss function chosen by the analyst. Writing these derivatives in software by hand is tedious, and makes it more difficult for users to implement new working models and loss functions.

In a unified approach, we propose a universal algorithm that encompasses all loss functions and working models that are well-chosen by the analyst. The algorithm uses automatic differentiation to compute the required derivatives, alleviating the need for the analyst to supply them 
 \citep{baydin2018automatic, margossian2019review}. When using our universal algorithm, the analyst only needs to provide software implementations of the MSM working model and loss function. Our Julia package is available at \url{https://github.com/herbps10/TargetedMSM.jl}. Its  implementation uses forward mode automatic differentiation through the Julia package \texttt{ForwardMode.jl} \citep{RevelsLubinPapamarkou2016}.

\subsection{Markov-Chain Monte Carlo}
\label{subsec:MCMC}
In practice, the targeted posterior distribution will not have a closed form. However, sampling techniques such as Markov-Chain Monte Carlo (MCMC) can be used to draw a set of samples $\bepsilon^{(1)}, \dots, \bepsilon^{(\ell)}$ from the posterior distribution of $\bepsilon$, which can then be used to generate a set of samples from the posterior distribution  of $\bbeta_0$ through the mapping $B$. In this work, we implement a Metropolis-Hastings algorithm \citep{metropolis1953equation, hastings1970monte, robert2015metropolis} with a Gaussian proposal distribution to generate samples of $\bepsilon$ which are then mapped to samples of $\bbeta$ from the posterior distribution. A detailed description of the algorithm is given in Appendix \ref{section:metropolis-hastings}. 

\subsection{Diagnostics}
\label{subsec:diagnostics}
In finite samples, it is possible that $\{ F_{\bepsilon} : \bepsilon \in \reals^p \}$ will exhibit pathological behavior in which $B(F_{\bepsilon})$ is bounded in such a way that there is no value of $\bepsilon$ such that $B(F_{\bepsilon})$ reaches the tails of the posterior distribution. However, a simple visual diagnostic can be used to identify this behavior in practice.  First, we assume that we have at hand a sample of $\ell$ draws $\bepsilon^{(1)}, \dots \bepsilon^{(\ell)}$ and $\bbeta^{(1)}, \dots, \bbeta^{(\ell)}$ from the posterior distributions of $\bepsilon$ and $\bbeta$, perhaps generated by the Metropolis-Hastings algorithm discussed in Section~\ref{subsec:MCMC}. The diagnostic simply consists in   plotting each sample with $\bbeta^{(t)}$ on the $y$-axis and
$\bepsilon^{(t)}$ on the $x$-axis. We expect to see one-to-one relationship between $\bepsilon^{(t)}$ and $\bbeta^{(t)}$. In a pathological setting, the $\bbeta^{(t)}$ reach a threshold. If this is observed, then the draws $\bbeta^{(1)}, \dots, \bbeta^{(\ell)}$ should not be trusted as an accurate characterization of the uncertainty of estimating $\bbeta_0$, as they collectively fail to capture the tails of the posterior distribution. An example of the proposed diagnostic is shown in Figure \ref{fig:tmle-diagnostic}.

\begin{figure}
    \centering
    \includegraphics[width=0.9\textwidth]{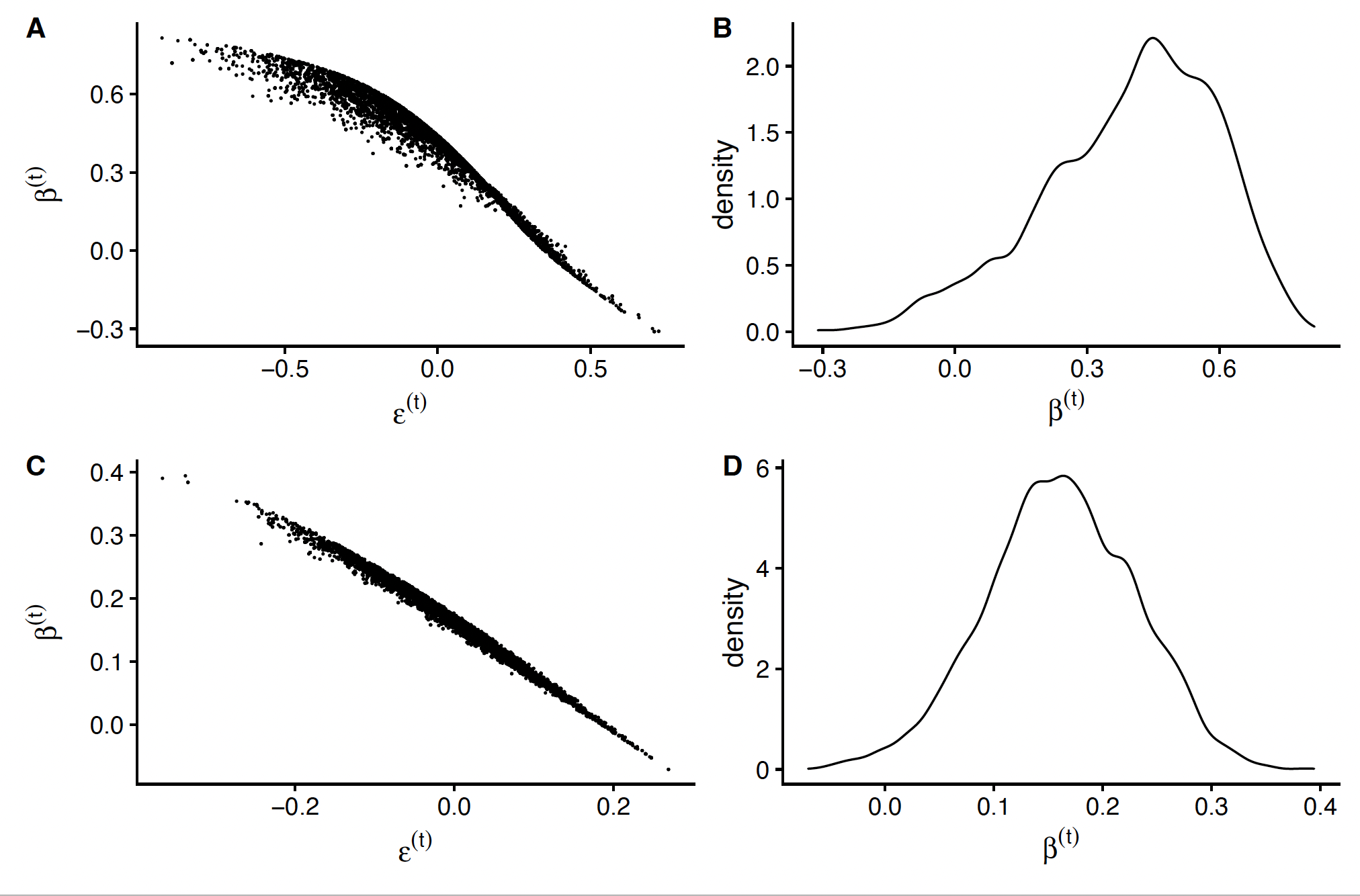}
    \caption{In finite samples, the targeted posterior can fail to accurately characterize the uncertainty in estimating $\bbeta$ because the map $\bepsilon \mapsto B(F_{\bepsilon})$ cannot reach the tails of the posterior distribution. In (A), a diagnostic plot shows joint posterior samples $\bepsilon^{(t)}$ and $\bbeta^{(t)}$, in which it can be seen that $\bbeta$ is bounded above. The marginal posterior distribution of $\bbeta$, shown in (B), is therefore skewed. Plots (C) and (D) show an example where this pathological behavior is not observed.  }
    \label{fig:tmle-diagnostic}
\end{figure}

\section{Simulation Study}
\label{section:simulation}
We present a simulation study that compares the finite sample performance of the frequentist and Bayesian targeted estimators for estimating Conditional Treatment Effects, as in the running example. All the code necessary to reproduce the simulation study is available at \url{https://github.com/herbps10/targeted_msms_paper}. The data generating distribution for the simulations is defined by the following conditional distributions:
\begin{align*}
    (X_1, X_2, X_3, X_4) &\sim N(\bm{0}, \bm{I}_4), \\
    A | X_1, X_2, X_3, X_4 &\sim \mathrm{Bernoulli}(\mathrm{logit}^{-1}(0.5X_1 - 0.5X_2 + 0.2X_3 -0.1X4)), \\
    Y | A, X_1, X_2, X_3, X_4 &\sim \mathrm{Bernoulli}(\mathrm{logit}^{-1}(X_2 + X_3 + 3A + 1.5AX_4)).
\end{align*}
A total of $500$ datasets were drawn from the data generating distribution for each sample size $n \in \{ 50, 100, 250, 500, 750, 1000 \}$.
We define a marginal structural model with linear working model \eqref{eq:linear-working-model} and squared-error loss function \eqref{eq:squared-error-loss} which yield the target parameter 
\begin{align*}
    B(P) = \argmin_{\bm{\beta} \in \mathbb{R}^2} \mathbb{E}_P\left[ \left( \Psi^{\CATEsymb}_P(X) - (1, X_4) \bm{\beta} \right)^2 \right],
\end{align*}
where $\Psi^{\CATEsymb}_P$ is defined as in the example and $\bbeta = (\beta_1, \beta_2)^\top \in \reals^2$. 

In order to investigate the behavior of the estimators under inconsistent estimation of the nuisance parameters, the parameters $g_P$ and $\bar{Q}_P$ were estimated in each sample using generalized linear models in four configurations: $g_P$ and $\bar{Q}_P$ estimated using parametric models with 
\begin{enumerate}[label={(\alph*)}]
    \item both models for $g_P$ and $\bar{Q}_P$  specified correctly,
    \item the model for $g_P$ specified correctly, that for $\bar{Q}_P$ misspecified,
    \item the model for $g_P$ misspecified, that for $\bar{Q}_P$ specified correctly,
    \item both models for $g_P$ and $\bar{Q}_P$ misspecified.
\end{enumerate}
The correctly specified regressions included all covariates, while the misspecified regressions included only $X_1$ and $X_4$ as covariates. 
For the Bayesian estimator, the following priors were used for $\beta_1$ and $\beta_2$:
\begin{align}
    (\beta_1, \beta_2) &\sim N(\bm{0}, \bm{I}_2).
\end{align}
The estimators of $\bbeta_0$ are evaluated by the empirical coverage of the 95\% credible (or confidence) intervals and in terms of absolute bias of the point estimators. For the Bayesian estimator, the posterior median is taken as a point estimator.

\paragraph*{Results} Figure  \ref{fig:simulation_coverage} shows the empirical coverage of the 95\% credible (confidence) intervals, and Figure~\ref{fig:simulation_bias} shows the absolute bias. The results are also summarized as Table \ref{tab:simulation_results} in the appendix. Both the estimators achieve optimal empirical coverage of the 95\% credible (confidence) interval and achieve low absolute bias when both nuisance parameters are correctly specified. Of note, for the smallest sample size the Bayesian estimator achieves slightly better coverage than the frequentist estimator, suggesting the Bayesian approach may be more conservative in some finite sample settings.
When either of the estimators are estimated with misspecified regressions the absolute bias still approaches zero as the sample size increases, demonstrating the double-robust properties of the estimators. Interestingly, the 95\% empirical coverage is not greatly affected by the misspecification, a result that is not guaranteed theoretically. The good performance in this case may be caused by the specific setup of the simulation study. Finally,  when both nuisance parameters are estimated inconsistently the absolute bias is larger than in the other scenarios and the empirical coverage is highly degraded for $\beta_1$. The fact that the empirical coverage of $\beta_2$ remains near-optimal is likely a result of the simulation setup.

\begin{figure}
    \centering
    \includegraphics[width=0.7\textwidth]{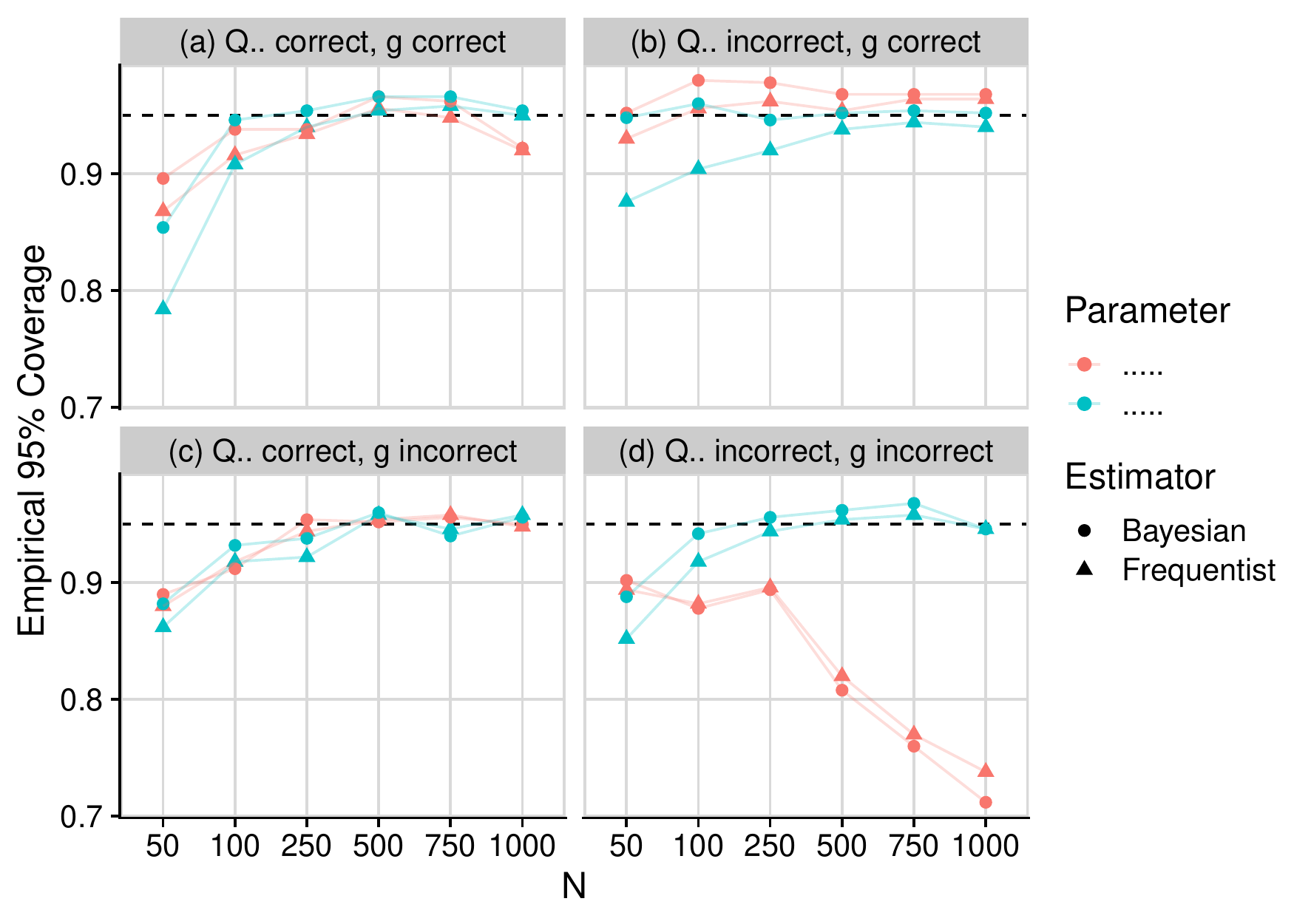}
    \caption{Frequentist empirical coverage of 95\% credible and confidence intervals in the simulation study for scenarios (a), (b), (c), and (d).}
    \label{fig:simulation_coverage}
\end{figure}

\begin{figure}
    \centering
    \includegraphics[width=0.7\textwidth]{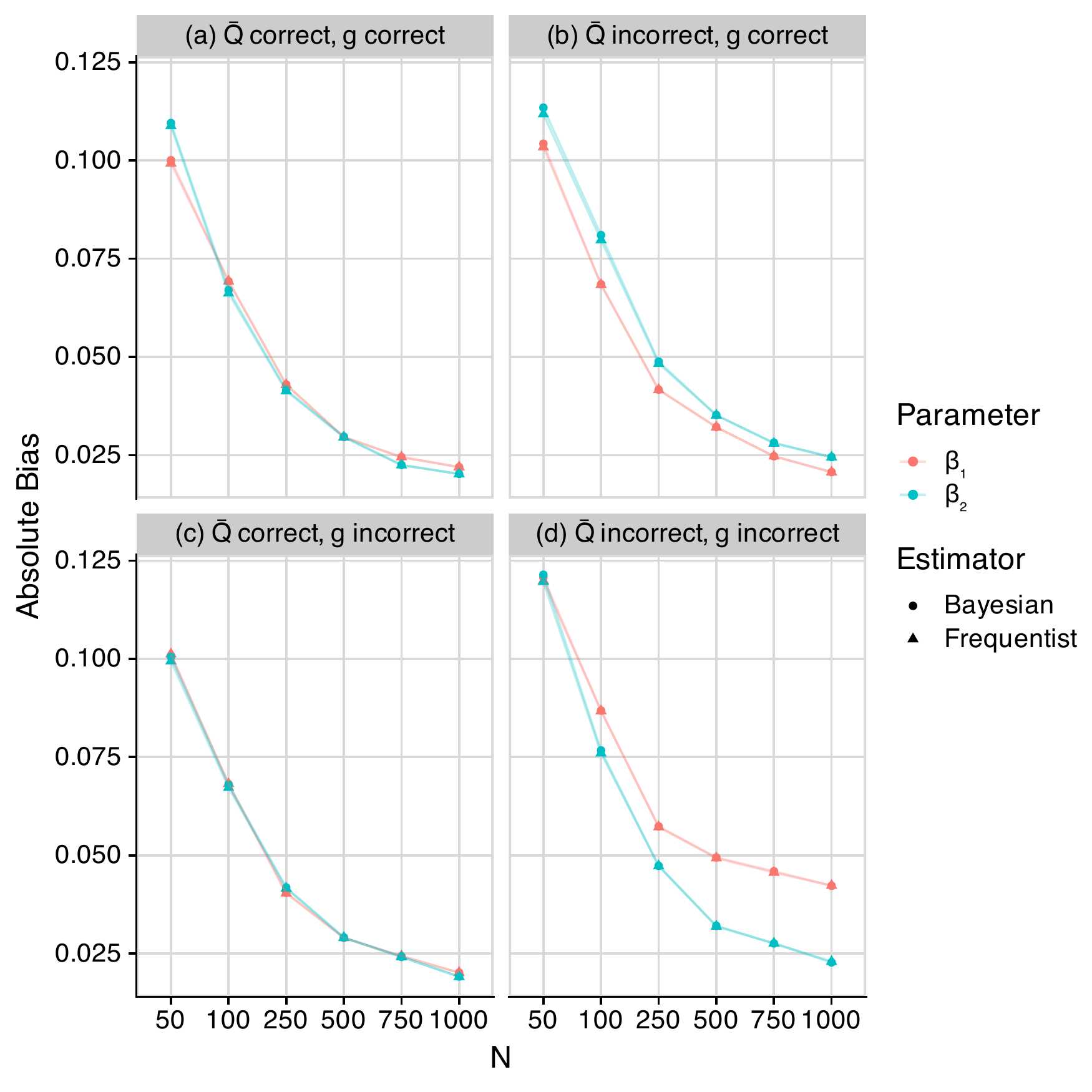}
    \caption{Absolute bias of estimator in the simulation study for scenarios (a), (b), (c), and (d).}
    \label{fig:simulation_bias}
\end{figure}

\section{Application}
\label{section:motivating-application}
As an example, we define and estimate the causal effect of a broad-based family planning intervention on postpartum contraceptive use using data from a randomized field experiment conducted in Lilongwe, Malawi \citep{karra2022fptrial}. All the code necessary to reproduce the data analysis is available at \url{https://github.com/herbps10/targeted_msms_paper}.

Postpartum women were randomized into intervention and control groups in which intervention consisted of receiving a family planning information package and counseling sessions, free transportation to a family planning clinic, free family planning services at the clinic, and phone consultations with a doctor and reimbursement for any treatment necessary due to side effects. Contraceptive use was measured as an outcome after a 2-year interval. 

We use the publicly available replication dataset from the original paper to conduct our analysis \citep{karra2022fpdata}. We refer to the paper and published study protocol for detailed information regarding the study design and data processing \citep{karra2020effect, karra2022fptrial}. The variables for each participant used in our analysis are:
\begin{itemize}
    \item $X$: number of children who are alive, educational attainment (primary or less vs. secondary or higher), age (three bins), age of sexual debut, ever used family planning, religion, work status, tribal group, neighborhood, and current contraceptive use, all measured at baseline;
    \item $A$: binary indicator of inclusion in intervention group;
    \item $Y$: binary indicator of contraceptive use at endline.
\end{itemize}
Following the original analysis, 5 observations were excluded due to missing data, leaving an analysis dataset of $1667$ observations. The intervention group included $781$ participants ($46.9\%$). At endline, $1240$ participants were recorded as using contraceptives ($74.3\%$). The number of children alive at baseline (from now on referred to as ``number of children at baseline"), which we consider as a potential treatment effect modifier, ranged from $0$ to $9$ (mean: 2.3, standard deviation: 1.3). Figure \ref{fig:pnas_effect_modifier} shows the full distribution of children at baseline.

\begin{figure}
    \centering
    \includegraphics[width=0.9\textwidth]{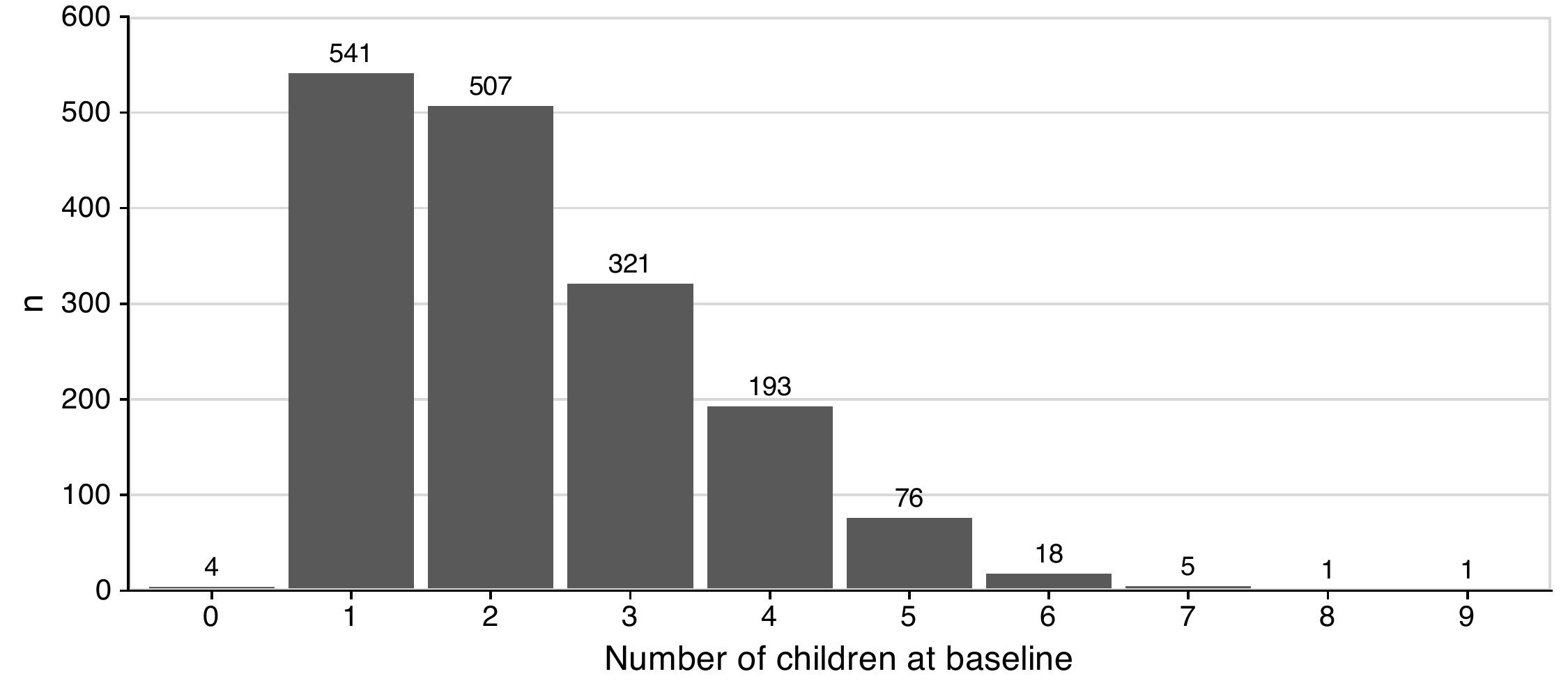}
    \caption{Distribution of the number of children at baseline among the cohort of women in the example.}
    \label{fig:pnas_effect_modifier}
\end{figure}

\paragraph*{Target Parameter} Let $\Psi_P^{\CATEsymb}$ be the Conditional Average Treatment Effect as defined in the running example, interpretable as the expected difference in contraceptive use at endline conditional on inclusion vs. non-inclusion in the intervention group. We consider the total number of children who are alive at baseline, denoted $X_{\mathrm{children}}$, to be a potential treatment effect modifier, and set $V =  X_{\mathrm{children}}$. A linear working model and squared-error loss function are adopted, as in the example. The target parameter is therefore 
\begin{align}
    B^\CATEsymb(P) = \argmin_{\bbeta \in \reals^2} \E_P\left[ \left( \PsiP^\CATEsymb(X) - \bbeta^\top (1, X_{\mathrm{children}})^\top\right)^2 \right].
\end{align}
The MSM parameters $\beta_1$ and $\beta_2$ can be interpreted as the intercept and slope of a line that best summarizes (in terms of a squared-error loss) the relationship between the CATE and number of children at baseline.

\paragraph*{Causal Identification} The statistical parameter $\bbeta_0$ can be given a causal interpretation if the identification assumptions for $\Psi^{\CATEsymb}$ hold. Positivity ($g_P(1, X) > 0$ for all $X$) in this setting can be interpreted as the assumption that the probability of randomization into the intervention group is  positive for all strata of covariates. The no unmeasured confounders assumption requires that the potential outcomes $Y(0)$ and $Y(1)$ are independent of intervention status conditional on the covariates. Both assumptions reasonably hold given the randomized study design. It is possible that the consistency assumption, however, may not hold due to spillover effects between the intervention and control groups.

\paragraph*{Estimation} The nuisance parameters $g_P$ and $\bar{Q}_P$ were estimated using an ensemble of learners using the SuperLearner algorithm \citep{vanderLaan2007superlearner, polley2021superlearner}. The base learners included generalized linear models, $\ell_1$-penalized regression \citep{friedman2010glmnet,simon2011glmnet}, random forests \citep{wright2017ranger}, gradient boosting trees \citep{chen2022xgboost}, and the Highly Adaptive Lasso \citep{benkeser2016highly,coyle2022hal,hejazi2020hal}. For the Bayesian estimator, the following weakly informative priors were applied, reflecting a lack of strong prior knowledge of the true values of the parameters:
\begin{align}
    (\beta_1, \beta_2) &\sim N(\bzero, \bm{I}_2).
\end{align}
The Metropolis-Hastings algorithm was run for $50,000$ iterations to yield a set of joint draws from the posterior distribution.

\paragraph*{Results} The frequentist estimates for $\beta_1$ and $\beta_2$ were $-0.016$ (95\% CI: $[-0.1, 0.07]$) and $0.033$ (95\% CI: $[0.0019, 0.063]$), respectively. The two-sided hypothesis test of $H_0: ``\beta_2 = 0"$ vs. $H_1: ``\beta_2 \neq 0"$ yields a $p$-value of $p=0.038$. Thus there is sufficient evidence at the $\alpha = 0.05$ significance level to reject the null hypothesis that $\beta_2 = 0$. The Bayesian posterior medians for $\beta_1$ and $\beta_2$ were $-0.015$ (95\% CI: $[-0.1, 0.074]$) and $0.032$ (95\% CI: $[-0.001, 0.066]$), respectively. The credible intervals can be interpreted as the regions in which the true values of $\beta_{1}$ and $\beta_2$ reside with 95\% probability. The posterior probability that $\beta_1 > 0$ is $97.1\%$. The posterior distribution of the working model $\{ m_{\bbeta} : \bbeta \in \mathcal{B} \}$ is shown in Figure \ref{fig:pnas_posterior_summary}. Both the frequentist and Bayesian methods reach substantially the same conclusion, although care should be taken to interpret the results following the correct interpretation of probability for each approach.

\begin{figure}
    \centering
    \includegraphics[width=0.8\textwidth]{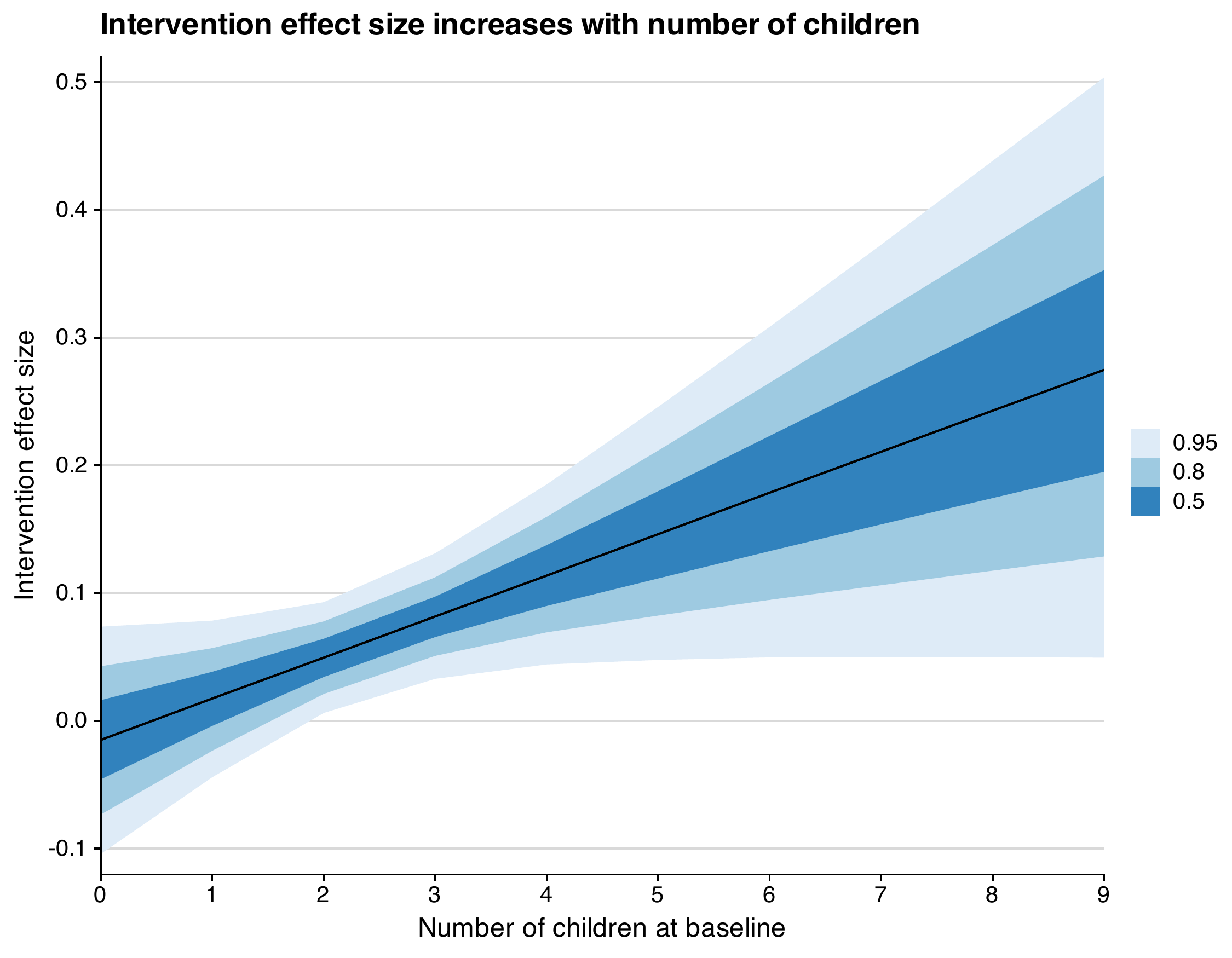}
    \caption{Posterior distribution of the parameter $\bbeta$ of the  working model $\{ m_{\bbeta} : \bbeta \in \mathcal{B} \}$, which can be interpreted as a linear approximation under a squared error loss of the conditional average treatment effect of the intervention on contraceptive use at endline. The black line is the posterior median and the shaded regions are the 50\%, 80\%, and 95\% credible intervals. The results suggest the treatment effect grows as the number of children increases.}
    \label{fig:pnas_posterior_summary}
\end{figure}

Kernel density plots of the posterior distribution of the MSM parameters are shown in Figure \ref{fig:pnas_posterior}. The figure also includes normal densities centered on the targeted point estimate $\bbeta_n^*$ and with variance given by the variance of $D^*(P_n^*)$, the estimate of the efficient influence function. As expected under the Bernstein von-Mises theorem (Theorem \ref{theorem:bvm:full}), the posterior distribution and normal densities are similar.

\begin{figure}
    \centering
    \includegraphics[width=0.9\textwidth]{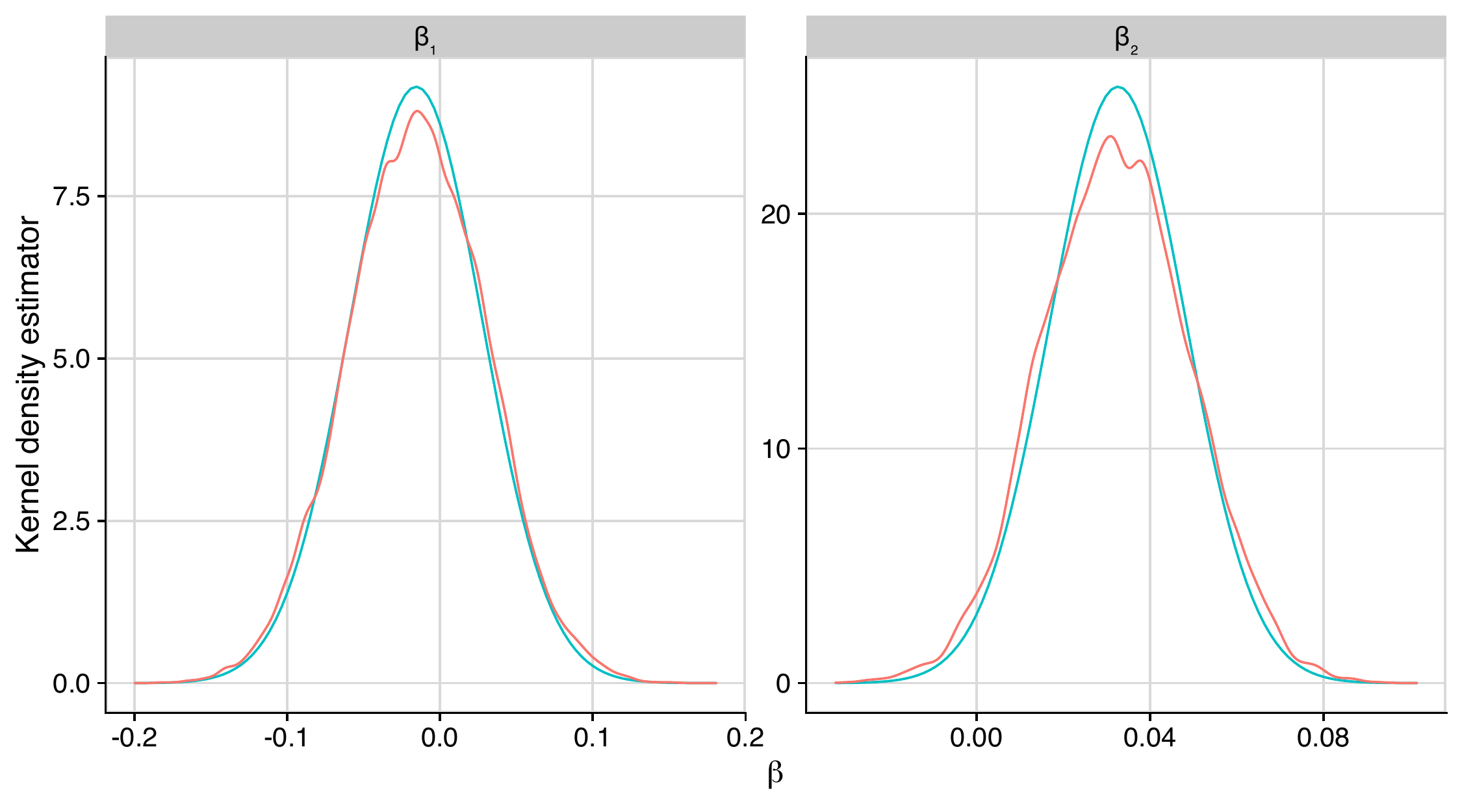}
    \caption{Kernel density plot of posterior distribution (red line) of $\beta_1$ and $\beta_2$ from the example application. A normal density is also pictured (blue line) centered on the frequentist point estimates $\hat{\bbeta}_n^*$ with variance given by the targeted estimate of the variance of the efficient influence function $D^*(P_0)$.}
    \label{fig:pnas_posterior}
\end{figure}

\section{Discussion}
\label{section:discussion}
In this work, we proposed a novel targeted Bayesian estimator for the parameter of a Marginal Structural Model. The general notation for MSMs defines a  parameter $B(P)$ which is the parameter of a user-specified working model $\{ m_{\bbeta} : \bbeta \in \mathcal{B} \} $ that best approximates a functional summary $\Psi_P$ with respect to a loss function $L$. A semi-parametric analysis of $P \mapsto B(P)$ derived its (already known) efficient influence function $D^*(P)$ at any $P$ in the model. The variance  $\mathrm{Var}_{P}(D^*(P)(O))$ is the asymptotic efficiency bound for estimating $B(P)$ within a non-parametric model. We then presented a general framework for efficient estimation using Targeted Maximum Loss-Based Estimation, and introduced a novel targeted Bayesian estimator. 

Bayesian TMLE blends frequentist and Bayesian approaches to statistical inference in a way that enjoys benefits of both approaches. From the frequentist TMLE it inherits a non-parametric estimation strategy that avoids unrealistic assumptions on the data generating process. Flexible estimation methods, including algorithms from machine learning, can be used (or combined, via ensemble methods) to estimate the required nuisance parameters, and are then naturally incorporated into the targeted Bayesian analysis. From the Bayesian paradigm, the targeted method inherits the ability to incorporate prior information into the analysis and the subjective interpretation of probability. An additional benefit is that contrary to traditional Bayesian non-parametric inference, which generally requires placing priors on complex probability spaces or abstract hyperparameters, in the targeted approach priors can be placed directly on the parameter of interest. For MSMs, priors are placed directly on $B(P)$.

We stress that the Bayesian TMLE should not be interpreted as ``100\% Bayesian" because Bayesian inference is not used to model the entire data-generating process (that is, the full joint distribution of $X$ and $Z$ is not modeled). Indeed, estimates of the required nuisance parameters are estimated separately and not necessarily within a Bayesian paradigm. In other words, the method is not ``generative", as is typically the case for Bayesian models \citep{gelman2020bayesian}. Bayesian TMLE is thus better understood as a hybrid approach that combines aspects of frequentist and Bayesian paradigms. 

Our work leaves open multiple directions for future research. More investigation is warranted to understand the finite sample performance of both the frequentist and Bayesian estimators for more complex data generating distributions and working models of high dimension.  In addition, further theoretical work is necessary to establish that the targeted posterior  converges to a distribution centered on the frequentist MLE with optimal variance given by the variance of the EIF. Part of the challenge with this result is to show it holds under reasonable assumptions on the convergence of the nuisance parameters, such as that $\norm{\bar{Q}_n^* - \bar{Q}_0}_2 = o_p(n^{-1/4})$. 

Bayesian inference allows for the introduction of prior information in a principled manner. While in the analyses presented here we used independent and weakly informative priors, an interesting direction for future work would be to investigate more complex priors. Hierarchical priors are commonly used in applied Bayesian analyses to share information between units, such that parameters are partially pooled towards a common mean \citep{gelman2013bda3}. This has the benefit of stabilizing estimates, particularly in small samples. Hierarchical priors could be applied to MSM parameters that exhibit hierarchical structure, which we hypothesize may lead to improved finite sample performance. As long as such hierarchical priors satisfy the (weak) assumptions necessary for the Bernstein von-Mises theorem, most importantly that the true parameter is included in the support of the prior, then such a prior choice will not affect the desirable asymptotic properties of the targeted posterior distribution.

\section{Acknowledgements}
We would like to thank Ismaël Castillo for helpful discussions, and Leontine Alkema and Laura Balzer for useful feedback. This article was written while the first author was a visiting researcher at MAP5 (UMR CNRS 8145, Université Paris Cité). This material is based upon research supported by the Chateaubriand Fellowship of the Office for Science \& Technology of the Embassy of France in the United States.

\bibliography{bibliography}

\begin{thebibliography}{42}
\providecommand{\natexlab}[1]{#1}
\providecommand{\url}[1]{\texttt{#1}}
\expandafter\ifx\csname urlstyle\endcsname\relax
  \providecommand{\doi}[1]{doi: #1}\else
  \providecommand{\doi}{doi: \begingroup \urlstyle{rm}\Url}\fi

\bibitem[Bayarri and Berger(2004)]{bayarri2004interplay}
M.~J. Bayarri and J.~O. Berger.
\newblock The interplay of {Bayesian} and frequentist analysis.
\newblock \emph{Statistical Science}, 19\penalty0 (1):\penalty0 58--80, 2004.

\bibitem[Baydin et~al.(2018)Baydin, Pearlmutter, Radul, and
  Siskind]{baydin2018automatic}
A.~G. Baydin, B.~A. Pearlmutter, A.~A. Radul, and J.~M. Siskind.
\newblock Automatic differentiation in machine learning: a survey.
\newblock \emph{Journal of Marchine Learning Research}, 18:\penalty0 1--43,
  2018.

\bibitem[Benkeser and van~der Laan(2016)]{benkeser2016highly}
D.~Benkeser and M.~van~der Laan.
\newblock The highly adaptive lasso estimator.
\newblock In \emph{2016 IEEE International conference on Data Science and
  Advanced Analytics (DSAA)}, pages 689--696. IEEE, 2016.

\bibitem[Bezanson et~al.(2017)Bezanson, Edelman, Karpinski, and
  Shah]{Julia-2017}
J.~Bezanson, A.~Edelman, S.~Karpinski, and V.~B. Shah.
\newblock Julia: A fresh approach to numerical computing.
\newblock \emph{SIAM {R}eview}, 59\penalty0 (1):\penalty0 65--98, 2017.
\newblock \doi{10.1137/141000671}.
\newblock URL \url{https://epubs.siam.org/doi/10.1137/141000671}.

\bibitem[Bickel et~al.(1993)Bickel, Klaassen, Bickel, Ritov, Klaassen, Wellner,
  and Ritov]{bickel1993efficient}
P.~J. Bickel, C.~A. Klaassen, P.~J. Bickel, Y.~Ritov, J.~Klaassen, J.~A.
  Wellner, and Y.~Ritov.
\newblock \emph{Efficient and adaptive estimation for semiparametric models},
  volume~4.
\newblock Springer, 1993.

\bibitem[Chambaz et~al.(2012)Chambaz, Neuvial, and van~der
  Laan]{achambaz2012importance}
A.~Chambaz, P.~Neuvial, and M.~J. van~der Laan.
\newblock {Estimation of a non-parametric variable importance measure of a
  continuous exposure}.
\newblock \emph{Electronic Journal of Statistics}, 6\penalty0 (none):\penalty0
  1059 -- 1099, 2012.
\newblock \doi{10.1214/12-EJS703}.
\newblock URL \url{https://doi.org/10.1214/12-EJS703}.

\bibitem[Chen et~al.(2022)Chen, He, Benesty, Khotilovich, Tang, Cho, Chen,
  Mitchell, Cano, Zhou, Li, Xie, Lin, Geng, Li, and Yuan]{chen2022xgboost}
T.~Chen, T.~He, M.~Benesty, V.~Khotilovich, Y.~Tang, H.~Cho, K.~Chen,
  R.~Mitchell, I.~Cano, T.~Zhou, M.~Li, J.~Xie, M.~Lin, Y.~Geng, Y.~Li, and
  J.~Yuan.
\newblock \emph{xgboost: Extreme Gradient Boosting}, 2022.
\newblock URL \url{https://CRAN.R-project.org/package=xgboost}.
\newblock R package version 1.6.0.1.

\bibitem[Coyle et~al.(2022)Coyle, Hejazi, Phillips, {van der Laan}, and {van
  der Laan}]{coyle2022hal}
J.~R. Coyle, N.~S. Hejazi, R.~V. Phillips, L.~W. {van der Laan}, and M.~J. {van
  der Laan}.
\newblock \emph{{hal9001}: The scalable highly adaptive lasso}, 2022.
\newblock URL \url{https://github.com/tlverse/hal9001}.
\newblock R package version 0.4.3.

\bibitem[Diaz et~al.(2011)Diaz, Hubbard, and van~der Laan]{diaz2011targeted}
I.~Diaz, A.~E. Hubbard, and M.~J. van~der Laan.
\newblock Targeted {B}ayesian learning.
\newblock In \emph{Targeted Learning}, pages 475--493. Springer, 2011.

\bibitem[Díaz et~al.(2020)Díaz, Savenkov, and Kamel]{diaz2020nonparametric}
I.~Díaz, O.~Savenkov, and H.~Kamel.
\newblock {Nonparametric targeted Bayesian estimation of class proportions in
  unlabeled data}.
\newblock \emph{Biostatistics}, 23\penalty0 (1):\penalty0 274--293, 06 2020.
\newblock ISSN 1465-4644.
\newblock \doi{10.1093/biostatistics/kxaa022}.
\newblock URL \url{https://doi.org/10.1093/biostatistics/kxaa022}.

\bibitem[Friedman et~al.(2010)Friedman, Hastie, and
  Tibshirani]{friedman2010glmnet}
J.~Friedman, T.~Hastie, and R.~Tibshirani.
\newblock Regularization paths for generalized linear models via coordinate
  descent.
\newblock \emph{Journal of Statistical Software}, 33\penalty0 (1):\penalty0
  1--22, 2010.
\newblock \doi{10.18637/jss.v033.i01}.
\newblock URL \url{https://www.jstatsoft.org/v33/i01/}.

\bibitem[Gelman et~al.(2011)Gelman, Carlin, Stern, Dunson, Vehtari, and
  Rubin]{gelman2013bda3}
A.~Gelman, J.~B. Carlin, H.~S. Stern, D.~B. Dunson, A.~Vehtari, and D.~B.
  Rubin.
\newblock \emph{Bayesian Data Analysis (3rd ed.)}.
\newblock Chapman and Hall/CRC, 2011.
\newblock URL \url{https://doi.org/10.1201/b16018}.

\bibitem[Gelman et~al.(2020)Gelman, Vehtari, Simpson, Margossian, Carpenter,
  Yao, Kennedy, Gabry, B{\"u}rkner, and Modr{\'a}k]{gelman2020bayesian}
A.~Gelman, A.~Vehtari, D.~Simpson, C.~C. Margossian, B.~Carpenter, Y.~Yao,
  L.~Kennedy, J.~Gabry, P.-C. B{\"u}rkner, and M.~Modr{\'a}k.
\newblock Bayesian workflow.
\newblock \emph{arXiv preprint arXiv:2011.01808}, 2020.

\bibitem[Hastings(1970)]{hastings1970monte}
W.~K. Hastings.
\newblock {Monte Carlo} sampling methods using {Markov Chains} and their
  applications.
\newblock \emph{Biometrika}, 57\penalty0 (1):\penalty0 97--109, 1970.
\newblock ISSN 00063444.
\newblock URL \url{http://www.jstor.org/stable/2334940}.

\bibitem[Hejazi et~al.(2020)Hejazi, Coyle, and {van der Laan}]{hejazi2020hal}
N.~S. Hejazi, J.~R. Coyle, and M.~J. {van der Laan}.
\newblock {hal9001}: Scalable highly adaptive lasso regression in {R}.
\newblock \emph{Journal of Open Source Software}, 2020.
\newblock \doi{10.21105/joss.02526}.
\newblock URL \url{https://doi.org/10.21105/joss.02526}.

\bibitem[Hájek(2019)]{hajek2019probability}
A.~Hájek.
\newblock {Interpretations of Probability}.
\newblock In E.~N. Zalta, editor, \emph{The {Stanford} Encyclopedia of
  Philosophy}. Metaphysics Research Lab, Stanford University, {F}all 2019
  edition, 2019.

\bibitem[Karra and Canning(2022)]{karra2022fpdata}
M.~Karra and D.~Canning.
\newblock {Replication Data for: The causal effect of a family planning
  intervention on women’s contraceptive use and birth spacing}, 2022.
\newblock URL \url{https://doi.org/10.7910/DVN/KNNYZN}.

\bibitem[Karra et~al.(2020)Karra, Canning, et~al.]{karra2020effect}
M.~Karra, D.~Canning, et~al.
\newblock The effect of improved access to family planning on postpartum women:
  protocol for a randomized controlled trial.
\newblock \emph{JMIR Research Protocols}, 9\penalty0 (8):\penalty0 e16697,
  2020.

\bibitem[Karra et~al.(2022)Karra, Maggio, Guo, Ngwira, and
  Canning]{karra2022fptrial}
M.~Karra, D.~Maggio, M.~Guo, B.~Ngwira, and D.~Canning.
\newblock The causal effect of a family planning intervention on
  women\&\#x2019;s contraceptive use and birth spacing.
\newblock \emph{Proceedings of the National Academy of Sciences}, 119\penalty0
  (22):\penalty0 e2200279119, 2022.
\newblock \doi{10.1073/pnas.2200279119}.
\newblock URL \url{https://www.pnas.org/doi/abs/10.1073/pnas.2200279119}.

\bibitem[Margossian(2019)]{margossian2019review}
C.~C. Margossian.
\newblock A review of automatic differentiation and its efficient
  implementation.
\newblock \emph{Wiley Interdisciplinary Reviews: Data Mining and Knowledge
  Discovery}, 9\penalty0 (4):\penalty0 e1305, 2019.

\bibitem[Metropolis et~al.(1953)Metropolis, Rosenbluth, Rosenbluth, Teller, and
  Teller]{metropolis1953equation}
N.~Metropolis, A.~W. Rosenbluth, M.~N. Rosenbluth, A.~H. Teller, and E.~Teller.
\newblock Equation of state calculations by fast computing machines.
\newblock \emph{The Journal of Chemical Physics}, 21\penalty0 (6):\penalty0
  1087--1092, 1953.

\bibitem[Neugebauer and van~der Laan(2007)]{neugebauer2007nonparametric}
R.~Neugebauer and M.~van~der Laan.
\newblock Nonparametric causal effects based on marginal structural models.
\newblock \emph{Journal of Statistical Planning and Inference}, 137\penalty0
  (2):\penalty0 419--434, 2007.

\bibitem[Petersen et~al.(2014)Petersen, Schwab, Gruber, Blaser, Schomaker, and
  van~der Laan]{petersen_tmle2014}
M.~Petersen, J.~Schwab, S.~Gruber, N.~Blaser, M.~Schomaker, and M.~van~der
  Laan.
\newblock Targeted maximum likelihood estimation for dynamic and static
  longitudinal marginal structural working models.
\newblock \emph{Journal of Causal Inference}, 2\penalty0 (2):\penalty0
  147--185, 2014.
\newblock \doi{doi:10.1515/jci-2013-0007}.
\newblock URL \url{https://doi.org/10.1515/jci-2013-0007}.

\bibitem[Polley et~al.(2021)Polley, LeDell, Kennedy, and {van der
  Laan}]{polley2021superlearner}
E.~Polley, E.~LeDell, C.~Kennedy, and M.~{van der Laan}.
\newblock \emph{SuperLearner: Super Learner Prediction}, 2021.
\newblock URL \url{https://CRAN.R-project.org/package=SuperLearner}.
\newblock R package version 2.0-28.

\bibitem[{Revels} et~al.(2016){Revels}, {Lubin}, and
  {Papamarkou}]{RevelsLubinPapamarkou2016}
J.~{Revels}, M.~{Lubin}, and T.~{Papamarkou}.
\newblock Forward-mode automatic differentiation in {J}ulia.
\newblock \emph{arXiv:1607.07892 [cs.MS]}, 2016.
\newblock URL \url{https://arxiv.org/abs/1607.07892}.

\bibitem[Robert(2015)]{robert2015metropolis}
C.~P. Robert.
\newblock The {Metropolis-Hastings} algorithm, 2015.
\newblock URL \url{https://arxiv.org/abs/1504.01896}.

\bibitem[Robins(1998)]{robins1998marginal}
J.~M. Robins.
\newblock Marginal structural models.
\newblock \emph{1997 proceedings of the American Statistical Association,
  section on Bayesian statistical science}, pages 1--10, 1998.

\bibitem[Rosenblum and van~der Laan(2010)]{rosenblum2010targeted}
M.~Rosenblum and M.~J. van~der Laan.
\newblock Targeted maximum likelihood estimation of the parameter of a marginal
  structural model.
\newblock \emph{The International Journal of Biostatistics}, 6\penalty0 (2),
  2010.

\bibitem[Simon et~al.(2011)Simon, Friedman, Hastie, and
  Tibshirani]{simon2011glmnet}
N.~Simon, J.~Friedman, T.~Hastie, and R.~Tibshirani.
\newblock Regularization paths for cox's proportional hazards model via
  coordinate descent.
\newblock \emph{Journal of Statistical Software}, 39\penalty0 (5):\penalty0
  1--13, 2011.
\newblock \doi{10.18637/jss.v039.i05}.
\newblock URL \url{https://www.jstatsoft.org/v39/i05/}.

\bibitem[Stein et~al.(1956)]{stein1956efficient}
C.~Stein et~al.
\newblock Efficient nonparametric testing and estimation.
\newblock In \emph{Proceedings of the Third Berkeley Symposium on Mathematical
  Statistics and Probability}, volume~1, pages 187--195, 1956.

\bibitem[Toth and van~der Laan(2016)]{toth2016tmle}
B.~Toth and M.~J. van~der Laan.
\newblock {TMLE} for marginal structural models based on an instrument.
\newblock \emph{U.C. Berkeley Division of Biostatistics Working Paper Series},
  Working Paper 350, 2016.
\newblock URL \url{https://biostats.bepress.com/ucbbiostat/paper350}.

\bibitem[van~der Laan(2008)]{van2008construction}
M.~J. van~der Laan.
\newblock The construction and analysis of adaptive group sequential designs.
\newblock \emph{U.C. Berkeley Division of Biostatistics Working Paper Series},
  Working Paper 232, 2008.
\newblock URL \url{https://biostats.bepress.com/ucbbiostat/paper232}.

\bibitem[van~der Laan and Rose(2011)]{van2011targeted}
M.~J. van~der Laan and S.~Rose.
\newblock \emph{Targeted learning: causal inference for observational and
  experimental data}.
\newblock Springer New York Dordrecht Heidelberg London, 2011.

\bibitem[van~der Laan and Rose(2018)]{van2018targeted}
M.~J. van~der Laan and S.~Rose.
\newblock \emph{Targeted learning in data science: causal inference for complex
  longitudinal studies}.
\newblock Springer International Publishing, 2018.

\bibitem[van~der Laan et~al.(2007)van~der Laan, Polley, and
  Hubbard]{vanderLaan2007superlearner}
M.~J. van~der Laan, E.~C. Polley, and A.~E. Hubbard.
\newblock Super learner.
\newblock \emph{Statistical Applications in Genetics and Molecular Biology},
  6\penalty0 (1), 2007.
\newblock \doi{doi:10.2202/1544-6115.1309}.
\newblock URL \url{https://doi.org/10.2202/1544-6115.1309}.

\bibitem[van~der Vaart(2002)]{vdv02}
A.~van~der Vaart.
\newblock Semiparametric statistics.
\newblock In \emph{Lectures on probability theory and statistics
  ({S}aint-{F}lour, 1999)}, volume 1781 of \emph{Lecture Notes in Math.}, pages
  331--457. Springer, Berlin, 2002.

\bibitem[van~der Vaart(1998)]{vandervaart2000asymptotic}
A.~W. van~der Vaart.
\newblock \emph{Asymptotic statistics}, volume~3 of \emph{Cambridge Series in
  Statistical and Probabilistic Mathematics}.
\newblock Cambridge University Press, Cambridge, 1998.

\bibitem[van~der Vaart and Wellner(1996)]{vanDerVaart1996weak}
A.~W. van~der Vaart and J.~A. Wellner.
\newblock \emph{Weak convergence and empirical processes: with applications to
  statistics}.
\newblock Springer Series in Statistics, 1996.

\bibitem[Wright and Ziegler(2017)]{wright2017ranger}
M.~N. Wright and A.~Ziegler.
\newblock {ranger}: A fast implementation of random forests for high
  dimensional data in {C++} and {R}.
\newblock \emph{Journal of Statistical Software}, 77\penalty0 (1):\penalty0
  1--17, 2017.
\newblock \doi{10.18637/jss.v077.i01}.

\bibitem[Zheng and van~der Laan(2010)]{zheng2010asymptotic}
W.~Zheng and M.~J. van~der Laan.
\newblock Asymptotic theory for cross-validated targeted maximum likelihood
  estimation.
\newblock \emph{U.C. Berkeley Division of Biostatistics Working Paper Series},
  Working Paper 273, 2010.
\newblock URL \url{https://biostats.bepress.com/ucbbiostat/paper273}.

\bibitem[Zheng et~al.(2016)Zheng, Petersen, and van~der Laan]{zheng2016doubly}
W.~Zheng, M.~Petersen, and M.~J. van~der Laan.
\newblock Doubly robust and efficient estimation of marginal structural models
  for the hazard function.
\newblock \emph{The International Journal of Biostatistics}, 12\penalty0
  (1):\penalty0 233--252, 2016.

\bibitem[Zheng et~al.(2018)Zheng, Luo, and van~der Laan]{zheng2018marginal}
W.~Zheng, Z.~Luo, and M.~J. van~der Laan.
\newblock Marginal structural models with counterfactual effect modifiers.
\newblock \emph{The International Journal of Biostatistics}, 14\penalty0 (1),
  2018.

\end{thebibliography}

\section{Appendix}

\subsection{Full Statement and Proof of Theorem \ref{theorem:general-eif:full}
}
\label{proof:general-eif}

Here is the full version of Result~\ref{theorem:general-eif:short}. Recall that, for any $P\in\model$ and $s \in L^2_0(P)$ such that $s \neq 0$, $\norm{s}_\infty < \infty$, $\left\{P_{\epsilon} : |\epsilon| < \norm{s}_\infty^{-1} \right\} \subset \model$ fluctuates $P$ in the direction of $s$. For clarity, we now introduce the mapping $\Beps:\epsilon \mapsto B(P_{\epsilon})$.

\begin{theorem}[Efficient Influence Function of $B$]
\label{theorem:general-eif:full}
Let us assume that:
\begin{enumerate}
    \item \label{assumption:derivatives-exist} For every $P \in \model$, for every $t \in \mathcal{T}$, the following two conditions are met $P$-almost surely:
        \begin{enumerate}
            \item  $\bbeta' \mapsto L_m(t, \bbeta')(X)$ is differentiable at every $\bbeta \in \mathcal{B}$ with derivative $\dot{L}_m(t, \bbeta)(X) \in \reals^p$,
            \item $\bbeta' \mapsto \dot{L}_m(t, \bbeta')(X)$ is differentiable at every $\bbeta \in \mathcal{B}$ with derivative $\ddot{L}_m(t, \bbeta)(X) \in \reals^{p \times p}$.
        \end{enumerate}
        Moreover, for every $P \in \model$, for every $\bbeta \in \mathcal{B}$, it holds $P$-almost surely that 
        \begin{enumerate}\setcounter{enumii}{2}
            \item $t' \mapsto \dot{L}_m(t', \bbeta)(X)$ is differentiable at every $t \in \mathcal{T}$ with derivative $\gradient{} \dot{L}_m(t, \bbeta)(X) \in \reals^p$.
        \end{enumerate}
    \item \label{assumption:implicit-function-theorem} The function $(\epsilon, \bbeta) \mapsto U(\epsilon, \bbeta) := \E_{\Peps}\left[ \dot{L}_m(\Psi_{\Peps}(X), \bbeta)(X) \right]$ is differentiable in the neighborhood of $(0, \Beps(0))$. 
    \item \label{assumption:matrix-invertible} For every $P \in \model$, $\E_{P}\left[ \ddot{L}_m(\Psi_{P}(X), B(P))(X) \right]$ is invertible.
    \item \label{assumption:u-equals-zero} For every $P \in \model$, for all $s \in L^2_0(P)$ such that $s \neq 0$, $\norm{s}_\infty < \infty$, it holds that $U(\epsilon, \Beps(\epsilon)) = 0$ in an $\epsilon$-neighborhood of $0$.
    \item (Dominated Convergence) \label{assumption:bounded-hessian} For all $P \in \mathcal{M}$, there exists a random variable $G_1 \in \reals_{+}^{p \times p}$ with $\E_P[G_1] < \infty$, such that $|\ddot{L}_m(\Psi_P(X), \bbeta)(X)| \leq G_1$ $P$-almost surely in a $\bbeta$-neighborhood of $\Beps(\epsilon)$. Here and below the absolute value and inequalities are to be understood entrywise.
    \item (Dominated Convergence) \label{assumption:bounded-derivative} For all $P \in \mathcal{M}$, there exists a random variable $G_2 \in \reals_+^p$ with $\E_P[G_2] < \infty$ such that $|\frac{d}{d\epsilon} \dot{L}_m(\Psi_{P_\epsilon}(X), \Beps(\epsilon))(X)| \leq G_2$ $P$-almost surely in an $\epsilon$-neighborhood of $0$.
    \item \label{assumption:delta} For all $P \in \model$, for all $s \in L^2_0(P)$ such that $s \neq 0$, $\norm{s}_\infty < \infty$, if $\left\{ P_\epsilon : |\epsilon| < \norm{s}_\infty^{-1} \right\} \subset \model$ is characterized by $\frac{dP_\epsilon}{dP} = 1 + \epsilon s$ for all $\epsilon \in \reals$ such that $|\epsilon| < \norm{s}_\infty^{-1}$, 
    then the mapping $\epsilon \mapsto \Psi_{P_\epsilon}(X)$ is almost surely differentiable at 0 and
    \begin{align}
        \frac{d}{d\epsilon} \Psi_{\Peps}(X) \evalat{\epsilon = 0} = \E_P \left[\Delta^*(P)(O) s(O) \mid X \right]
    \end{align}
    for a function $\Delta^*(P) \in L_0^2(P)$. 
\end{enumerate}
Then the target functional $P \mapsto B(P)$ is pathwise differentiable at every $P \in \model$, with an efficient influence function $D^*(P)$ at $P$  given by
\begin{align}
    D^*(P)(O) &= M^{-1} \left[
        D_1^*(P)(O) + D_2^*(P)(X)
    \right],
\end{align}
where $D_1^*(P), D_2^*(P) \in L_0^2(P)$ are given by
\begin{align}
    D_1^*(P)(O) &= \gradient{} \dot{L}_m(\Psi_P(X), B(P))(X) \times \Delta^*(P)(O), \\
    D_2^*(P)(X) &= \dot{L}_m(\Psi_P(X), B(P))(X),
\end{align}
and the normalizing matrix $M$ is given by
\begin{align}
    M = -\mathbb{E}_{P}\left[\ddot{L}_m(\PsiP(X), B(P))(X) \right].
\end{align}
\end{theorem}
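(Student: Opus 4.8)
The plan is to exploit the implicit characterization of $B(P_\epsilon)$ along a one-dimensional fluctuation. Fix $P \in \model$ and a direction $s \in L_0^2(P)$ with $s \neq 0$, $\norm{s}_\infty < \infty$, and let $\{P_\epsilon\}$ be the associated fluctuation, $dP_\epsilon/dP = 1 + \epsilon s$; write $\Beps : \epsilon \mapsto B(P_\epsilon)$, whose derivative at $0$ is assumed to exist. By Assumption~\ref{assumption:u-equals-zero} the first-order optimality condition $U(\epsilon, \Beps(\epsilon)) = \bzero$ holds in an $\epsilon$-neighborhood of $0$, where $U(\epsilon, \bbeta) = \E_{P_\epsilon}[\dot{L}_m(\Psi_{P_\epsilon}(X), \bbeta)(X)]$. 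Differentiating this identity at $\epsilon = 0$ by the chain rule — $U$ being differentiable near $(0, \Beps(0))$ by Assumption~\ref{assumption:implicit-function-theorem} — and solving for $\frac{d}{d\epsilon}\Beps(\epsilon)|_{\epsilon=0}$, which is permitted because $\frac{\partial}{\partial\bbeta}U(0, \Beps(0))$ is invertible (Assumption~\ref{assumption:matrix-invertible}), yields
\begin{align}
    \frac{d}{d\epsilon}\Beps(\epsilon)\evalat{\epsilon = 0} = -\left[\frac{\partial}{\partial\bbeta}U(0, \Beps(0))\right]^{-1}\frac{\partial}{\partial\epsilon}U(0, \Beps(0)).
\end{align}

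It then remains to evaluate the two partial derivatives. Interchanging differentiation and expectation in the $\bbeta$-derivative, legitimate by the domination in Assumption~\ref{assumption:bounded-hessian} and dominated convergence, gives $\frac{\partial}{\partial\bbeta}U(0, \Beps(0)) = \E_P[\ddot{L}_m(\Psi_P(X), B(P))(X)] = -M$, so the leading factor is $M^{-1}$. For the $\epsilon$-derivative I would write $U(\epsilon, \bbeta) = \int \dot{L}_m(\Psi_{P_\epsilon}(x), \bbeta)(x)\,(1 + \epsilon s(x))\,dP(x)$ and differentiate under the integral sign (justified by Assumption~\ref{assumption:bounded-derivative} and the differentiability of $\epsilon \mapsto \Psi_{P_\epsilon}(X)$ supplied by Assumption~\ref{assumption:delta}). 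This produces a \emph{score term} $\E_P[\dot{L}_m(\Psi_P(X), B(P))(X)\,s(O)]$ and a \emph{chain-rule term} $\E_P[\gradient{}\dot{L}_m(\Psi_P(X), B(P))(X)\cdot\tfrac{d}{d\epsilon}\Psi_{P_\epsilon}(X)|_{\epsilon = 0}]$. Substituting $\tfrac{d}{d\epsilon}\Psi_{P_\epsilon}(X)|_{\epsilon = 0} = \E_P[\Delta^*(P)(O)s(O)\mid X]$ from Assumption~\ref{assumption:delta} and applying the tower property — the factor $\gradient{}\dot{L}_m(\Psi_P(X), B(P))(X)$ being a function of $X$ — rewrites the chain-rule term as $\E_P[D_1^*(P)(O)\,s(O)]$, with $D_1^*(P)(O) = \gradient{}\dot{L}_m(\Psi_P(X), B(P))(X)\,\Delta^*(P)(O)$. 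Setting $D_2^*(P)(X) = \dot{L}_m(\Psi_P(X), B(P))(X)$, we obtain $\frac{\partial}{\partial\epsilon}U(0, \Beps(0)) = \E_P[(D_1^*(P)(O) + D_2^*(P)(X))\,s(O)]$.

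Combining the two evaluations gives
\begin{align}
    \frac{d}{d\epsilon}B(P_\epsilon)\evalat{\epsilon = 0} = \E_P\big[M^{-1}\big(D_1^*(P)(O) + D_2^*(P)(X)\big)\,s(O)\big] = \E_P[D^*(P)(O)\,s(O)],
\end{align}
with $D^*(P) := M^{-1}(D_1^*(P) + D_2^*(P))$. The right-hand side is linear in $s$ and, once $D^*(P) \in (L_0^2(P))^p$ is verified, bounded by $\norm{D^*(P)}_{L^2(P)}$, hence extends continuously to $\mathrm{closure}(\mathcal{S})$; this is precisely pathwise differentiability. For the $L_0^2$ membership: $D_2^*(P)$ is centered because $\E_P[\dot{L}_m(\Psi_P(X), B(P))(X)] = \bzero$ is the population first-order condition defining $B(P)$ (cf.~\eqref{eq:Up-definition}), and $D_1^*(P)$ is centered because $\E_P[\Delta^*(P)(O) \mid X] = \bzero$, which follows from Assumption~\ref{assumption:delta}. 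Finally, since $\model$ is non-parametric its tangent space at $P$ is all of $L_0^2(P)$, so this unique gradient $D^*(P) = M^{-1}(D_1^*(P) + D_2^*(P))$, with $M = -\E_P[\ddot{L}_m(\Psi_P(X), B(P))(X)]$, is the efficient influence function, as claimed.

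The argument is structurally short, so the real work — and the main obstacle — is the analytic bookkeeping: verifying the differentiability hypotheses at $(0, \Beps(0))$ and, above all, rigorously justifying the two interchanges of differentiation with integration. The $\epsilon$-interchange is the delicate one, since the integrand $\dot{L}_m(\Psi_{P_\epsilon}(x), \bbeta)(x)(1 + \epsilon s(x))$ depends on $\epsilon$ through both the perturbed measure and the functional $\Psi_{P_\epsilon}$; this is exactly what Assumptions~\ref{assumption:derivatives-exist}, \ref{assumption:bounded-derivative}, and \ref{assumption:delta} are tailored to control. A secondary point requiring care is the finiteness of $\E_P[\norm{D^*(P)}^2]$, for which one must invoke regularity of $\Delta^*(P)$ and of $\gradient{}\dot{L}_m$ together with the invertibility of $M$.
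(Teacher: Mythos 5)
Your proposal is correct and follows essentially the same route as the paper's proof: differentiate the first-order condition $U(\epsilon, \Beps(\epsilon)) = \bzero$ at $\epsilon = 0$, invert $\frac{\partial U}{\partial \bbeta}(0,\Beps(0)) = -M$ via dominated convergence, and split the $\epsilon$-derivative into the score term $\E_P[\dot{L}_m\, s]$ and the chain-rule term involving $\Delta^*(P)$ (your explicit density-times-product-rule computation is the same calculation as the paper's decomposition of $\E_{P_\epsilon}$ into $\E_P + \epsilon\E_P[\cdot\, s]$). Your added remarks on the centering of $D_1^*$, $D_2^*$ and on why the gradient is efficient in a nonparametric model are sensible but do not change the argument.
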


\begin{proof}
Set arbitrarily $P \in \model$ and $s \in L_0^2(P)$ such that $s \neq 0$ and $\norm{s}_\infty < \infty$. Consider the submodel $\left\{ P_\epsilon : |\epsilon| < \norm{s}_\infty^{-1} \right\}$ given by $\frac{dP_\epsilon}{dP} = 1 + \epsilon s$. By assumption \ref{assumption:u-equals-zero}, $U(\epsilon, \Beps(\epsilon)) = 0$ in an $\epsilon$-neighborhood of~$0$. 
By assumption \ref{assumption:implicit-function-theorem}, $U$ is differentiable in a neighborhood of $(0, \Beps(0))$. Denote by $\frac{d U}{d \epsilon}$ and $\frac{\partial U}{\partial \bbeta}$ the derivative and gradient of $U$ with respect to its $\epsilon$ and $\bbeta$ arguments, respectively. Differentiating $\epsilon \mapsto U(\epsilon, \Beps(\epsilon))$ at $\epsilon = 0$ yields
\begin{align}
    \frac{d U}{d \epsilon} (0, \Beps(0)) +  \frac{\partial U}{\partial \bbeta}(0, \Beps(0)) \frac{d \Beps}{d \epsilon} (0) = 0.
\end{align}
Solving for $\frac{d \Beps}{d \epsilon}(0)$, and using assumption \ref{assumption:matrix-invertible}, yields
\begin{align}
    \label{eq:dB-depsilon}
    \frac{d \Beps}{d \epsilon}(0) = - \left\{ \frac{\partial U}{\partial \bbeta}(0, \Beps(0)) \right\}^{-1} \frac{d U}{d \epsilon}(0, \Beps(0)).
\end{align}
Now we compute each of these derivatives. The first derivative becomes (the second equality is justified below)
\begin{align*}
    -\frac{\partial U}{\partial \bbeta} (0, \Beps(0)) &= -\frac{\partial}{\partial \bbeta} \E_{\Peps}\left[ \dot{L}_m(\Psi_{\Peps}(X), \bbeta)(X) \right] \evalat{\epsilon = 0, \bbeta = \Beps(0)} \\
                                   &= -\E_{\Peps}\left[ \frac{\partial}{\partial \bbeta} \dot{L}_m(\Psi_{\Peps}(X), \bbeta)(X) \evalat{\epsilon = 0, \bbeta = \Beps(0)} \right] \\
                                     &= -\E_{P}\left[ \ddot{L}_m(\Psi_{P}(X), \Beps(0))(X) \right] \\
                                     &=: M,
\end{align*}
where in the second line the gradient and expectation can be exchanged under the Dominated Convergence theorem (DCT) and assumptions \ref{assumption:derivatives-exist} and \ref{assumption:bounded-hessian}. Note that $M$ is invertible under assumption~\ref{assumption:matrix-invertible}. The second derivative splits into two parts, first by definition of $U$, second by definition of $P_\epsilon$:
\begin{align}
    \frac{d U}{d \epsilon}(0, \Beps(0)) =& \frac{d}{d\epsilon} \E_{\Peps}\left[ \dot{L}_m(\Psi_{\Peps}(X), \bbeta)(X) \right] \evalepszero \\
                                        =& \frac{d}{d\epsilon} \E_{P} \left[  \dot{L}_m(\Psi_{\Peps}(X), \bbeta)(X)  \right] \evalepszero \\
                                        &+ \frac{d}{d\epsilon} \epsilon \E_{P}\left[  \dot{L}_m(\Psi_{\Peps}(X), \bbeta)(X) s(O) \right] \evalepszero \label{eq-dU-depsilon}.
\end{align}
The first term becomes (the first equality is justified below)
\begin{align}
    \frac{d}{d\epsilon} \E_{P} \left[ \dot{L}_m(\Psi_{\Peps}(X), \bbeta)(X) \right] \evalepszero &=  \E_{P} \left[ \frac{d}{d\epsilon} \dot{L}_m(\Psi_{\Peps}(X), \bbeta)(X) \evalepszero \right]\\
    &= \E_P \left\{ \gradient{} \dot{L}_m(\Psi_{\Peps}(X), \bbeta)(X) \evalepszero \frac{d}{d\epsilon} \Psi_{\Peps}(X) \evalat{\epsilon = 0} \right\} \\
    &= \E_P \left\{ \gradient{} \dot{L}_m(\Psi_{P}(X), \Beps(0))(X) \times \Delta^*(P)(O) s(O) \right\} \label{eq:depsilon-part1},
\end{align}
where the interchange of the derivative and expectation in the first line is by the DCT and assumptions~\ref{assumption:derivatives-exist} and \ref{assumption:bounded-derivative}. 
For the second term, apply the product rule to yield
\begin{align}
    \frac{d}{d\epsilon} \epsilon \E_{P}\left[ \dot{L}_m(\Psi_{\Peps}(X), \bbeta)(X) s(O) \right] \evalepszero =& \left[ \epsilon \frac{d}{d\epsilon} \E_P \left[ \dot{L}_m(\Psi_{\Peps}(X), \bbeta)(X) s(O) \right] \right] \evalepszero \\
    &+ \E_P\left[ \dot{L}_m(\Psi_{\Peps}(X), \bbeta)(X) \evalepszero s(O) \right] \\
    =& \E_P\left[ \dot{L}_m(\Psi_P(X), \Beps(0))(X) s(O) \right] \label{eq:depsilon-part2}.
\end{align}
Collecting \eqref{eq:dB-depsilon}, \eqref{eq-dU-depsilon}, \eqref{eq:depsilon-part1}, and \eqref{eq:depsilon-part2} yields
\begin{align}
    \frac{d\Beps}{d\epsilon}(0) =& M^{-1} \bigg[  \E_P \left[ \gradient{}\dot{L}_m(\Psi_{P}(X), \Beps(0))(X) \times \Delta^*(P)(O) s(O) \right] \\
    &+  \E_P\left[ \dot{L}_m(\Psi_P(X), \Beps(0))(X) s(O) \right] \bigg],
\end{align}
hence
\begin{align}
    D^*(P)(O) &= M^{-1} \left[
        \gradient{} \dot{L}_m(\Psi_P(X), B(P))(X) \times \Delta^*(P)(O) +
        \dot{L}_m(\Psi_P(X), B(P))(X)
    \right].
\end{align}
This completes the proof.
\end{proof}

\subsection{Proof of Lemma \ref{lemma:eif-delta-example} }
\label{proof:eif-delta-example}
\begin{proof}
The derivative of $\Psi_{P_\epsilon}$ evaluated at $\epsilon = 0$ decomposes into two parts:
\begin{align}
    \frac{d}{d\epsilon} \Psi_{\Peps}(X) \evalat{\epsilon = 0} &= \frac{d}{d\epsilon} \bar{Q}_{P_\epsilon}^{(1)}(X) -  \bar{Q}_{P_\epsilon}^{(0)}(X) \evalat{\epsilon = 0}.
\end{align}
For any $a \in\{0,1\}$, write $\bar{Q}_{P_\epsilon}^{(a)}(X)$ and apply the definition of $P_\epsilon$ to arrive at
\begin{align}
    \bar{Q}_{P_\epsilon}^{(a)}(X) &= \E_{\Peps}\left[ Y | A = a, X \right] \\
    &= \frac{\E_P\left[ Y | A = a, X \right] + \epsilon \E_P\left[Y s(O) | A = a, X \right]}{1 + \epsilon \E_P[s(O) | A = a, X]} \text{\quad \citep[Lemma 1]{achambaz2012importance}} \\
    &= \frac{\E_P[Y | A = a, X] + \epsilon \E_P[s(O)(Y - \E_P[Y | A = a, X]) | A = a, X]}{1 + \epsilon \E_P[s(O) | A = a, X]} \\
        & \qquad + \frac{ \epsilon \E_P[Y | A = a, X] \E_P[s(O) | A = a, X]}{1 + \epsilon \E_P[s(O) | A = a, X]} \\
    &= \frac{\E_P[Y | A = a, X](1 + \epsilon \E_P[s(O) | A = a, X]) + \epsilon \E_P[s(O)(Y - \E_P[Y | A = a, X]] | A = a, X]}{1 + \epsilon \E_P[s(O) | A = a, X]} \\
    &= \E_P[Y | A = a, X] + \frac{\epsilon \E_P[s(O)(Y - \E_P[Y | A = a, X]] | A = a, X]}{1 + \epsilon \E_P[s(O) | A = a, X]}.
\end{align}
Taking the derivative of $\bar{Q}_{P_\epsilon}^{(a)}$ at $\epsilon = 0$ yields
\begin{align}
    \frac{d}{d \epsilon} \bar{Q}_{P_\epsilon}^{(a)}(X) \evalat{\epsilon = 0} &= \E_P\left[s(O)(Y - \E_P[Y | A = a, X]) \middle| A = a, X\right] \\
    &= \E_P\left[s(O)\frac{\mathbb{I}(A = a)}{P[A = a \mid X]}\left(Y - \E_P[Y | A = a, X]\right) \middle| X \right] \\
    &= \E_P\left[s(O)\frac{\mathbb{I}(A = a)}{P[A = a \mid X]}(Y - \bar{Q}_{P}^{(A)}(X) ) \middle| X \right].
\end{align}
Therefore
\begin{align}
    \frac{d}{d\epsilon} \Psi_{\Peps}(X) \evalat{\epsilon = 0} &= \E_P\left[ s(O) \left\{ \frac{\mathbb{I}(A = 1)}{P\left[A = 1 \middle| X\right]} -  \frac{\mathbb{I}(A = 0)}{P\left[A = 0 \middle| X\right]}  \right\} (Y - \bar{Q}_P^{(A)}(X)) \middle| X \right],
\end{align}
which completes the proof.
\end{proof}

\subsection{Proof of Theorem \ref{theorem:eif-example}}
\label{proof:eif-example}
\begin{proof}
The result follows directly from Theorem \ref{theorem:general-eif:full}. We start by checking each of the assumptions required therein.
For every $P\in\model$, $t\in\mathcal{T}$, the derivatives $\dot{L}_m(t, \bbeta)(X) = -2(t - \bbeta^\top (1,V)^{\top})(1,V)^{\top}$ and $\ddot{L}_m(t, \bbeta)(X) = 2(1,V)^\top (1,V)$ exist at every $\bbeta \in \mathcal{B}$ $P$-almost surely. For every $P \in \model$, $\bbeta \in \mathcal{B}$, the derivative $\gradient{} \dot{L}_m(t, \bbeta)(X) = -2(1,V)^{\top}$ exists at every $t \in \mathcal{T}$ $P$-almost surely. Therefore Assumption (\ref{assumption:derivatives-exist}) is satisfied. Assumptions (\ref{assumption:implicit-function-theorem})-(\ref{assumption:bounded-derivative}) are satisfied by assumption. Lemma (\ref{lemma:eif-delta-example}) shows that Assumption $(\ref{assumption:delta})$ holds, and gives the form of $\Delta^*(P)$. Plugging in the calculated derivatives $\ddot{L}_m, \ddot{L}_m, \gradient{}\dot{L}_m$ into the form of $D^*(P)$ of Theorem~\ref{theorem:general-eif:full} and simplifying gives the stated result. 
\end{proof}

\subsection{Proof of Theorem \ref{theorem:asymptotic-normality}}
\label{proof:tmle-asymptotic-normality}
\begin{proof}
To begin, write
\begin{align}
    \hat{\bbeta}^*_n - \bbeta_0 =& -P_n[D^*(P_n^*)] & \text{ (bias term)} \\
    &+ (P_n - P_0)[D^*(P_0)] & \text{ (CLT term)} \\
    &+ (P_n - P_0)[D^*(P_n^*) - D^*(P_0)] & \text{ (empirical process term)} \\
    &+ \hat{\bbeta}_n^* - \bbeta_0 + P_0[D^*(P_n^*)] & \text{ (second-order remainder)}. 
\end{align}
We take each of these terms in turn:
\begin{description}
    \item[bias term:] $P_n [D^*(P_n^*)] = o_P(n^{-1/2})$ by construction of the TMLE fluctuation; 
    \item[CLT term:] $\sqrt{n}(P_n - P_0)[D^*(P_0)] \rightsquigarrow N(0, P_0[D^*(P_0) D^*(P_0)^\top)]$ by the Central Limit theorem;
    \item[empirical process term:]  $(P_n - P_0)[D^*(P_n^*) - D^*(P_0)] = o_P(n^{-1/2})$ by the assumed Donsker conditions \citep[Lemma 19.24]{vandervaart2000asymptotic} and assumed convergence of the nuisance parameters;
    \item[second-order remainder:] $\hat{\bbeta}_n^* - \bbeta_0 + P_0 [D^*(P_n^*)] = o_P(n^{-1/2})$ by assumption.
\end{description}
This completes the proof.
\end{proof}

\subsection{Supplementary Example}
\label{section:supplementary-example}

Here we present an adaptation of the running example introduced in Section~\ref{sec:stat:view} where the outcome $Y$ is continuous. The form of the EIF for $B$ is the same as in the main example. The form of the fluctuation models for TMLE, however, requires more work.

Let $Z = (A, Y)$, where $A$ is a binary treatment indicator and $Y \in \reals$ is a \textit{continuous} outcome. Set arbitrarily $P \in \model$. Let $g_P(a, x) := P\left(A = a \middle| X = x\right)$ for any $(a,x) \in \{0,1\} \times \mathcal{X}$, and suppose that $g_P(1, X) > 0$ holds $P$-almost-surely. For both $a\in\{0,1\}$ let $\bar{Q}_P^{(a)}(X) = \E_P\left[ Y \middle| A = a, X\right]$, which is defined almost surely on $\mathcal{X}$. Define $\PsiP^{\flat}(X) = \bar{Q}^{(1)}_P(X) - \bar{Q}^{(0)}_P(X)$. Following the main example, let $B^\flat$ be given by
\begin{align}
    B\flat(P) = \argmin_{\bbeta \in \mathcal{B}} \E_P\left[ \left( \PsiP(X) - \bbeta^\top (1, V)^\top\right)^2 \right].
\end{align}

\subsubsection{Frequentist TMLE}
For any $P \in \model$ with corresponding $\PsiP^\flat$, $\bar{Q}_P$, $Q_{P}$ and $\eta_P$, characterize the fluctuation model with transformation $x \mapsto \phi(x) = x$ as follows: for all $\bepsilon \in \reals^p$,
\begin{align}
    \QbarFluctuation{P}{\bepsilon}^{(0)}(O) &= \bar{Q}_P^{(0)}(X) + \cleverH_0(O) \bepsilon^\top (1, V)^\top , \\
    \QbarFluctuation{P}{\bepsilon}^{(1)}(O) &= \bar{Q}_P^{(1)}(X) + \cleverH_1(O) \bepsilon^\top (1, V)^\top, \\
    \QFluctuation{P}{\bepsilon}(X) &= C(\bepsilon) \exp\left( \bepsilon^\top  (\Psi^{\flat}_P(X) - B^{\flat}(P)^\top (1, V)^\top) (1, V)^\top \right) Q_X(X).
\end{align}
The clever covariates are given by
\begin{align}
    \cleverH_0(O) &= -\frac{I(A = 0)}{g_P(0, X)}, \\
    \cleverH_1(O) &= \phantom{-}\frac{I(A = 1)}{g_P(1, X)},
\end{align}
and the loss functions by
\begin{align}
    \tmleLoss_0(t, O)   &= \frac{1}{2} I(A = 0)(Y - t)^2, \\
    \tmleLoss_1(t, O)   &= \frac{1}{2} I(A = 1)(Y - t)^2.
\end{align}
To ensure this is a valid setup, we need to check conditions (L1), (L2), and (M1).
Condition \ref{condition: well-defined-loss} is satisfied for this choice of loss function \citep{van2011targeted}. The derivatives of the loss functions are 
\begin{align}
    \dot{\tmleLoss}_0(t, O) &= -I(A = 0)(Y -t), \\
    \dot{\tmleLoss}_1(t, O) &= -I(A = 1)(Y - t).
\end{align}
Therefore, \ref{condition:loss-differentiable} is satisfied. To check \ref{condition:well-defined-fluctuation}, see that $(\phi^{-1})'(x) = 1$, so
\begin{align}
    &\dot{\tmleLoss}_0(\bar{Q}_P^{(0)}(X), O) (\phi^{-1})'(\phi(\bar{Q}_P^{(0)}(X))) \cleverH_0(O) +  \dot{\tmleLoss}_1(\bar{Q}_P^{(1)}(X), O)(\phi^{-1})'(\phi(\bar{Q}_P^{(1)}(X))) \cleverH_1(O) \\
    &= \dot{\tmleLoss}_0(\bar{Q}_P^{(0)}(X), O) \cleverH_0(O) +  \dot{\tmleLoss}_1(\bar{Q}_P^{(1)}(X), O) \cleverH_1(O) \\
    &= -\left\{ \frac{I(A = 1)}{g_P(1, X)} - \frac{I(A = 0)}{g_P(0, X)} \right\} (Y - \bar{Q}_P^{(A)}(X)) \\
    &= -\Delta_P^*(X).
\end{align}

\subsubsection{Bayesian TMLE}
The TMLE loss functions $\mathcal{L}_0$ and $\mathcal{L}_1$ in this example can be interpreted as the conditional log-likelihoods of $Y$ given either $A=0$ or $A = 1$ and $X$ under Gaussian distributions with fixed unit variance. This suggests constructing each $F_{\bepsilon}$ such that $Y$ given $A$ and $X$ follows a Gaussian distribution.
Fix arbitrarily $P \in \model$, $\bepsilon \in \reals^p$, introduce $\sigma_P^2(A, X) = \mathrm{Var}_P\left[Y \middle| A, X\right]$, and let $F_{\bepsilon}$ be characterized by
\begin{align}
    X &\sim Q_{P, \bepsilon}, \\
    A \mid X &\sim \mathrm{Bernoulli}(1/2), \\
    Y \mid X, A &\sim \mathrm{N}\left(\bar{Q}_{P, \bepsilon}^{(A)}(X), \sigma_P^2(A, X)\right)
\end{align}
where
\begin{align}
    Q_{P,\bepsilon}(X) &= C(\bepsilon)\exp\left( M^{-1} \bepsilon^\top (\Psi^{\flat}_P(X) - B^\flat(P)^\top (1, V)^\top) (1, V)^\top \right) Q_X(X), \\
    \bar{Q}_{P, \bepsilon}^{(0)}(X) &= \bar{Q}_P^{(0)}(X) + \sigma_P^2(0, X) H_0(O)  \bepsilon^\top M^{-1} (1, V)^\top, \\
    \bar{Q}_{P, \bepsilon}^{(1)}(X) &= \bar{Q}_P^{(1)}(X) + \sigma_P^2(1, X) H_1(O)  \bepsilon^\top M^{-1} (1, V)^\top. 
\end{align}
Then the conditional log-likelihood of $O$ under $F_{\bepsilon}$ is given by
\begin{align}
    \log f(O \mid \bepsilon) &= -\log(\sigma_P^2(A, X)) -\frac{1}{2 \sigma_P^2(A, X)} \left( Y -  \bar{Q}^{(A)}_{P,\bepsilon}(X) \right)^2 + \log Q_{P, \bepsilon}(X) + \text{constant} \\
    &= -\log(\sigma_P^2(A, X)) -\frac{1}{\sigma_P^2(A, X)} \left( \mathcal{L}_0(\bar{Q}_{P, \bepsilon}^{(0)}(X), O) + \mathcal{L}_1(\bar{Q}_{P, \bepsilon}^{(1)}(X), O) \right)\\
    & \qquad + \log Q_{P, \bepsilon}(X) + \text{constant}.
\end{align}
The gradient of the log-likelihood evaluated at $\bepsilon = \bzero$ is then
\begin{align}
    \frac{\partial}{\partial \bepsilon} \log f(O|\bepsilon) \evalat{\bepsilon = \bzero} =& \left\{ \frac{I(A = 1)}{g_P(1, X)} - \frac{I(A = 0)}{g_P(0, X)} \right\}(Y - \bar{Q}_P^{(A)}(X))M^{-1}(1, V)^\top \\
    &+ (\Psi_P^{\flat}(X) - B^\flat(P)^\top (1, V)^\top)M^{-1} (1, V)^\top,
\end{align}
which is equal to $D^*(P)(O)$. The Hessian of the log-likelihood evaluated at $\bepsilon = \bzero$ is given by
\begin{align}
    \frac{\partial^2}{\partial \bepsilon^2} \log f(O|\bepsilon) \evalat{\bepsilon = \bzero} =& \left( \frac{\sigma^2_P(A, X)}{g_P(A, X)^2}\right) M^{-1} (1, V)^\top(1, V) M^{-1} \\
    &+ (\Psi_P(X)^\flat - (B(P)^{\flat})^\top (1, V)^\top)^2 M^{-1} (1, V)^\top(1, V) M^{-1}.
\end{align}
Note that
\begin{align}
    P_0\left[\frac{\partial^2}{\partial \bepsilon^2} \log f(O|\bepsilon) \evalat{\bepsilon = \bzero}\right] = P_0[\lambda^*(P_0)].
\end{align}
Therefore conditions \ref{condition:bernstein-von-mises-gradient} and 
\ref{condition:bernstein-von-mises-hessian} of Theorem \ref{theorem:bvm:full} are satisfied for this fluctuation model.

\subsection{Statement and Proof of Theorem \ref{theorem:bvm:full}}
\label{section:bvm-proof}

\begin{definition}[Local Asymptotic Normality {\cite[Definition 7.14]{vandervaart2000asymptotic}}]
The sequence of statistical models $\{ P_{n,\theta} : \theta \in \Theta \}$ is locally asymptotically normal (LAN) at $\theta$ if there exist matrices $r_n$ and $I_{\theta}$ and random vectors $\Delta_{n,\theta}$ such that $\Delta_{n,\theta} \rightsquigarrow N(0, I_{\theta})$ and for every converging sequence $h_n \rightarrow h$
    \begin{align}
        \log \frac{dP_{n,\theta + r_n^{-1}h_n}}{dP_{n,\theta}} = h^\top \Delta_{n,\theta} - \frac{1}{2} h^\top I_{\theta} h + o_{P_{n,\theta}}(1).
    \end{align}
\end{definition}

It is sometimes more practical to show that $\{ P_{n,\theta}: \theta \in \Theta \}$ is differentiable in quadratic mean, which implies local asympotic normality \citep[Chapter 7]{vandervaart2000asymptotic}. 

Let
\begin{align}
    f_n^0\left(O_{1:n} \middle| \bepsilon \right) &:= \prod_{i=1}^n f\left(O_i \middle| \bepsilon, \bar{Q}_0, Q_0, \eta_0, \sigma_0 \right), \\
    \Pi_\epsilon^0\left(\bepsilon \middle| O_{1:n} \right) &:= \Pi_\epsilon\left(\bepsilon \middle| O_{1:n}, \bar{Q}_0, Q_0, \eta_0, \sigma_0 \right), \\
    \Pi_\beta^0\left(\bbeta \middle| O_{1:n} \right) &:= \Pi_\beta\left(\bbeta \middle| O_{1:n}, \bar{Q}_0, Q_0, \eta_0, \sigma_0 \right),
\end{align}
 be the likelihood, posterior for $\bepsilon$, and posterior for $\bbeta$ corresponding to the submodel $\mathcal{F}^0 = \{F_{\bepsilon}^{0} : \bepsilon \in \reals^{p}\}$ -- that is, the submodel through $P_0$ satisfying \eqref{eq-likelihood-condition}. Introduce $\vartheta_{0} : \bepsilon \mapsto B(F_{\bepsilon}^{0})$.

\begin{theorem}[Oracular Bernstein von-Mises]
\label{theorem:bvm:full}
Let us assume that:
    \begin{enumerate}
        \item \label{condition:bernstein-von-mises-lan} $\mathcal{F}^0$ 
        is locally asymptotically normal.
        \item\label{condition:vartheta} The mapping $\vartheta_{0}$ is invertible and its inverse $\vartheta_{0}^{-1}$ is twice differentiable at $\bbeta_{0} = B(P_0)$.
        \item \label{condition:bernstein-von-mises-tests} For every $\delta > 0$, there exists a sequence of tests $(\phi_n)_{n \geq 1}$ such that
        \begin{align}
            P_0 [\phi_n] \to 0 \text{ and } \sup_{\norm{\bbeta - \bbeta_0} \geq \delta} F_{\bepsilon}^0[(1 - \phi_n)] \to 0.
        \end{align}
        \item \label{condition:bernstein-von-mises-differentiability} The following holds $P_0$-almost surely: the map $\bepsilon \mapsto \log f^0_{n}(O | \bepsilon)$ is twice differentiable at every $\bepsilon \in \reals^p$ with gradient $\frac{\partial}{\partial \bepsilon} \log f_n^0(O \mid \bepsilon) \in \mathbb{R}^p$ and Hessian $\frac{\partial^2}{\partial \bepsilon^2} \log f_n^0(O | \bepsilon) \in \reals^{p \times p}$.
        \item \label{condition:bernstein-von-mises-gradient} The gradient satisfies
        \begin{align}
            \frac{\partial}{\partial \bepsilon} \log f_n^0(O | \bepsilon) \evalat{\bepsilon = \bzero} &= D^*(P_0)(O).
        \end{align}
        \item \label{condition:bernstein-von-mises-hessian} Define
        \begin{align}
            \Lambda_n^0(O) := \frac{\partial^2}{\partial \bepsilon^2} \log f_n^0(O | \bepsilon) \evalat{\bepsilon = \bzero}.
        \end{align} Then
        \begin{align}
            P_0[\Lambda_n^0] = P_0[\lambda^*(P_0)],
        \end{align}
        where $P_0[\lambda^*(P_0)]$ is assumed to be non-singular.
        \item \label{condition:bernstein-von-mises-prior} The joint prior density $\pi_\beta$ is absolutely continuous in a neighborhood of $\bbeta_0$ and $\pi_\beta(\bbeta_0) > 0$.
    \end{enumerate}
    Then
    \begin{align*}
        \norm{\Pi^0_{\sqrt{n}(\bbeta - \bbeta_0)} \left( \cdot \mid O_{1:n} \right) - N\left(\Delta_n^0, P_0[\lambda^*(P_0)]\right)}_1 = o_P(1)
    \end{align*}
    where
    \begin{align}
        \Delta_{n}^0 = \frac{1}{\sqrt{n}} \sum_{i=1}^n P_0[\lambda^*(P_0)]^{-1} D^*(P_0)(O_i).
    \end{align}
\end{theorem}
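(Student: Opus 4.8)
The plan is to deduce the result from the classical parametric Bernstein--von Mises theorem applied to the finite-dimensional working model $\mathcal{F}^{0}=\{F_{\bepsilon}^{0}:\bepsilon\in\reals^{p}\}$, and then to transport the resulting posterior for $\bepsilon$ to one for $\bbeta$ along the map $\vartheta_{0}:\bepsilon\mapsto B(F_{\bepsilon}^{0})$. For the first part, observe that since $F_{\bzero}^{0}=P_{0}$ and $O_{1:n}$ are i.i.d.\ $P_{0}$, the true value of $\bepsilon$ in $\mathcal{F}^{0}$ is $\bzero$. Hypotheses \ref{condition:bernstein-von-mises-differentiability}--\ref{condition:bernstein-von-mises-hessian} say precisely that, near $\bzero$, $\mathcal{F}^{0}$ is a smooth, regular parametric experiment: the score at $\bzero$ is $D^{*}(P_{0})$, which is centered with finite second moment, and the Fisher information at $\bzero$ is $P_{0}[\lambda^{*}(P_{0})]$ --- recall $\lambda^{*}(P_{0})=D^{*}(P_{0})D^{*}(P_{0})^{\top}$, so this matrix is exactly the variance of the score, and hypothesis \ref{condition:bernstein-von-mises-hessian} records the corresponding information identity. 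Together with the LAN hypothesis \ref{condition:bernstein-von-mises-lan}, the testing/consistency hypothesis \ref{condition:bernstein-von-mises-tests}, and a prior for $\bepsilon$ that is continuous and strictly positive at $\bzero$, the parametric Bernstein--von Mises theorem \citep[Theorem~10.1]{vandervaart2000asymptotic} gives a total-variation approximation of the posterior of $\sqrt{n}\,\bepsilon$ by a Gaussian with covariance $P_{0}[\lambda^{*}(P_{0})]^{-1}$ centered at $P_{0}[\lambda^{*}(P_{0})]^{-1}\tfrac{1}{\sqrt{n}}\sum_{i=1}^{n}D^{*}(P_{0})(O_{i})$. The required continuity and positivity of the $\bepsilon$-prior at $\bzero$ follow from the analogous hypothesis \ref{condition:bernstein-von-mises-prior} on $\pi_{\beta}$ at $\bbeta_{0}=\vartheta_{0}(\bzero)$ via the change-of-variables formula \eqref{eq-epsilon-prior} and the differentiability and invertibility of $\vartheta_{0}$ supplied by hypothesis \ref{condition:vartheta}.

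For the second part, note that by hypothesis \ref{condition:vartheta} the map $\vartheta_{0}$ is a local diffeomorphism near $\bzero$ with $\vartheta_{0}(\bzero)=\bbeta_{0}$ and $\vartheta_{0}^{-1}$ twice differentiable at $\bbeta_{0}$; in particular $J:=\dot{\vartheta}_{0}(\bzero)$ exists. Since the score of $\mathcal{F}^{0}$ at $\bzero$ equals $D^{*}(P_{0})$ (hypothesis \ref{condition:bernstein-von-mises-gradient}), pathwise differentiability of $B$ (Section~\ref{section:efficiency-theory}) identifies this Jacobian through the Riesz representation of $\dot{B}$, i.e.\ $J$ is obtained from $\frac{\partial}{\partial\bepsilon}B(F_{\bepsilon}^{0})\evalat{\bepsilon=\bzero}$. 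On the event --- of probability tending to one, by the first part --- that the $\bepsilon$-posterior concentrates in an $O_{P}(n^{-1/2})$ ball of $\bzero$, a Taylor expansion of $\vartheta_{0}$ gives $\sqrt{n}(\bbeta-\bbeta_{0})=J\sqrt{n}\,\bepsilon+o_{P}(1)$ under the posterior, so the pushforward of the $\bepsilon$-posterior under $\bbeta=\vartheta_{0}(\bepsilon)$ is within $o_{P}(1)$ in total variation of the affine image of the Gaussian from the first part; substituting the expressions for $J$ and for the information matrix into that affine image yields the asserted limit $N(\Delta_{n}^{0},P_{0}[\lambda^{*}(P_{0})])$. (Equivalently, one may reparametrize $\mathcal{F}^{0}$ by $\bbeta$ from the outset, place the prior $\pi_{\beta}$ directly, verify local asymptotic normality in the $\bbeta$-parametrization, and apply Theorem~10.1 there; this route folds the transport step into the LAN verification.)

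The main obstacle is the total-variation transport of the second part: total-variation distance is not preserved by arbitrary transformations, so the Gaussian approximation for $\bepsilon$ does not automatically carry over to $\bbeta$. Controlling it requires (i) genuine $n^{-1/2}$-concentration of the $\bepsilon$-posterior, for which hypothesis \ref{condition:bernstein-von-mises-tests} supplies exactly the testing/consistency ingredient of the Le~Cam--Schwartz part of the Bernstein--von Mises argument, localizing the problem to a shrinking neighborhood on which only the affine part of $\vartheta_{0}$ matters; and (ii) a local change of variables whose Jacobian factor converges uniformly on that neighborhood to the constant $\abs{\det J}$, so the limiting shape remains Gaussian, with the second-order differentiability of $\vartheta_{0}^{-1}$ from hypothesis \ref{condition:vartheta} absorbing the curvature term at the $n^{-1/2}$ scale. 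The remaining work --- carried out for the running example in this appendix --- is to verify the hypotheses for a concrete choice of $\mathcal{F}^{0}$, most substantially that $\mathcal{F}^{0}$ is locally asymptotically normal (for instance by checking differentiability in quadratic mean, which implies LAN) and that the information identity of hypothesis \ref{condition:bernstein-von-mises-hessian} holds.
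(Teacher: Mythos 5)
Your proposal is correct and, at bottom, rests on the same two ingredients as the paper's proof: the identification of the Jacobian \(\dot{\vartheta}_0(\bzero)=P_0[D^*(P_0)D^*(P_0)^\top]=P_0[\lambda^*(P_0)]\) via pathwise differentiability of \(B\) together with the fact that the score of \(\mathcal{F}^0\) at \(\bzero\) is \(D^*(P_0)\), followed by an appeal to the parametric Bernstein--von Mises theorem (van der Vaart, Theorem 10.1). The only difference is ordering: the paper reparametrizes \(\mathcal{F}^0\) by \(\bbeta=\vartheta_0(\bepsilon)\) \emph{first}, computes the Fisher information in the \(\bbeta\)-coordinates as \(P_0[\lambda^*(P_0)]^{-1}\) (using the chain rule for the Hessian and the vanishing of the first-order term), and then invokes Theorem 10.1 once --- exactly the route you describe in your closing parenthetical. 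Your primary route (BvM for \(\bepsilon\), then total-variation transport through \(\vartheta_0\)) reaches the same place but takes on the extra burden of controlling the pushforward of a total-variation approximation under a nonlinear map; you correctly identify this as the main obstacle and sketch the right localization-plus-linearization fix, but the paper's ordering makes that step unnecessary, since local asymptotic normality and the prior condition transfer across the smooth bijection \(\vartheta_0\) for free. One small point worth checking in either route: carrying the affine image through carefully gives a centering sequence \(n^{-1/2}\sum_i D^*(P_0)(O_i)\) (equivalently \(I_{\bbeta_0}^{-1}\) times the normalized \(\bbeta\)-score), so the extra factor \(P_0[\lambda^*(P_0)]^{-1}\) appearing in the displayed \(\Delta_n^0\) does not come out of the computation you (or the paper) set up; this is a discrepancy in the statement rather than in your argument, but your claim that the substitution ``yields the asserted limit'' should not be left unverified.
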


Note that condition \ref{condition:bernstein-von-mises-tests} can be difficult to verify. However, if $\bepsilon$ belongs to a compact set $E$, if $\mathcal{F}^{0}=\{ F_{\bepsilon}^0 : \bepsilon \in E \}$ is  identifiable (meaning $F_{\bepsilon}^0 = F_{\bepsilon'}^0$ implies $\bepsilon=\bepsilon'$) and if $\bepsilon \mapsto F_{\bepsilon}^{0}$ is continuous, then condition \ref{condition:bernstein-von-mises-tests} is not necessary \citep[Chapter 10]{vandervaart2000asymptotic}. 

\begin{proof}
First, we derive the Fisher information matrix of $\bbeta$. The log-density of $\bbeta$ is given by
\begin{align}
    \log f_n^0(O_{1:n} \mid \bbeta) &= \log f_n^0(O_{1:n} \mid \bepsilon = \vartheta_0^{-1}(\bbeta)).
\end{align}
Next, calculate
\begin{align}
    \frac{\partial^2}{\partial \bbeta^2} \log f_n^0&\left(O \middle| \bepsilon = \vartheta_0^{-1}(\bm{\beta})\right) \evalat{\bbeta = \bbeta_0} \\ 
    &= \left[\frac{\partial^2}{\partial \bbeta^2}  \vartheta_0^{-1}(\bbeta) \evalat{\bbeta = \bbeta_0} \right] \frac{\partial}{\partial \bepsilon} \log f_n^0\left(O \middle| \bepsilon\right) \evalat{\bepsilon = \bzero} \frac{\partial}{\partial \bepsilon} \log f_n^0\left(O \middle| \bepsilon\right) \evalat{\bepsilon = \bzero}^\top \\
    \label{eq:second-order-term}
    &\quad+ 
        \left[\frac{\partial}{\partial \bbeta} \vartheta_0^{-1}(\bbeta) \evalat{\bm{\beta} = \bbeta_0} \right] 
        \frac{\partial^2}{\partial \bepsilon^2} \log f_n^0\left(O_i \middle| \bepsilon\right) \evalat{\bepsilon = \bzero}
        \left[\frac{\partial}{\partial \bbeta} \vartheta_0^{-1}(\bbeta) \evalat{\bbeta = \bbeta_0} \right].
\end{align}
The first term (the product of a 3D tensor and a matrix) equals zero because $\bepsilon = \bzero$ maximizes $\bepsilon \mapsto \log f_n^0(O|\bepsilon)$, so
\begin{align}
    \frac{\partial}{\partial \bepsilon} \log f_n^0\left(O \middle| \bepsilon\right) \evalat{\bepsilon = \bzero} = \bzero.
\end{align}
The inner Hessian matrix of the second term equals $\Lambda_n^0(O)$. The surrounding matrices of the second term satisfy
\begin{align}
    \frac{\partial}{\partial \bbeta} \vartheta_0^{-1}(\bbeta) \evalat{\bbeta = \bbeta_0} = \left[ \frac{\partial}{\partial \bepsilon} \vartheta_0(\bepsilon) \evalat{\bepsilon = \bzero} \right]^{-1}.
\end{align}
Recall that $\vartheta_0(\bepsilon) = B(F_{\bepsilon}^0)$. Therefore, since $B$ is pathwise differentiable,
\begin{align}
    \frac{\partial}{\partial \bepsilon} \vartheta_0(\bepsilon) \evalat{\bepsilon = 0} &= P_0\left[ D^*(P_0)s^\top \right],
\end{align}
where $s \in (L_0^2(P_0))^p$ is the score of $F_n^0$ at $\bepsilon = \bzero$ that is, by assumption on the submodel $\mathcal{F}^0$, $s = D^*(P_0)$. Therefore
\begin{align}
    \frac{\partial}{\partial \bbeta} \vartheta_0^{-1}(\bbeta) \evalat{\bbeta = \bbeta_0} &= \left\{ P_0\left[D^*(P_0) D^*(P_0)^\top\right] \right\}^{-1} \\
    &= P_0[\lambda^*(P_0)]^{-1},
\end{align}
and
\begin{align}
    \frac{\partial^2}{\partial \bbeta^2} \log f_n^0&\left(O \middle| \bepsilon = \vartheta_0^{-1}(\bm{\beta})\right) \evalat{\bbeta = \bbeta_0} = P_0[\lambda^*(P_0)]^{-1} \Lambda_n^0(O) P_0[\lambda^*(P_0)]^{-1}.
\end{align}
The Fisher Information is then
\begin{align}
    I_{\bbeta_0} &:= \E_{P_0}\left[  \frac{\partial^2}{\partial \bbeta^2} \log f_n^0\left(O \middle| \bepsilon = \vartheta_0^{-1}(\bm{\beta})\right) \evalat{\bbeta = \bbeta_0} \right] \\
    &= \E_{P_0}\left[P_0[\lambda^*(P_0)]^{-1} \Lambda_n^0(O) P_0[\lambda^*(P_0)]^{-1} \right] \\
    &= P_0[\lambda^*(P_0)]^{-1} P_0[\Lambda_n^0(O)] P_0[\lambda^*(P_0)]^{-1} \\
    &= P_0[\lambda^*(P_0)]^{-1} P_0[\lambda^*(P_0)] P_0[\lambda^*(P_0)]^{-1} \\
    &= P_0[\lambda^*(P_0)]^{-1}.
\end{align}
The result follows from \citet[Theorem 10.1]{vandervaart2000asymptotic}.
\end{proof}

\begin{example}{Example (cont'd)} 
 Applying Theorem \ref{theorem:bvm:full} requires checking each of the assumptions in the context of the example.
\begin{itemize}
    \item Assumption  \ref{condition:bernstein-von-mises-lan}: the likelihood $f_n^0$ is that of an exponential family satisfying the conditions of  \citep[Example 7.7]{vandervaart2000asymptotic}, therefore $\{ f_n^0(\cdot | \bepsilon) : \bepsilon \in \reals^p \}$ is locally asymptotically normal.
    \item Assumption \ref{condition:vartheta}: it is impossible to directly check this condition; we propose a diagnostic in Section \ref{subsec:diagnostics} that an oracle knowing $P_0$, and hence $\mathcal{F}^0$, could rely on to check the invertibility of $\vartheta_0$.
    \item  Assumption \ref{condition:bernstein-von-mises-tests}: we restrict $F_{\bepsilon}^0$ to be defined for $\bepsilon \in E := [-C, C]$ for an arbitrary (large) constant $C$. Because $E$ is compact, assumption  \ref{condition:bernstein-von-mises-tests} is no longer necessary.
    \item Assumption \ref{condition:bernstein-von-mises-differentiability}: $\bepsilon \mapsto \log f_n^0(O \mid \bepsilon)$ is infinitely differentiable.
    \item Assumption \ref{condition:bernstein-von-mises-gradient}: it is easily checked that 
    \begin{align}
        \frac{\partial}{\partial \bepsilon} \log f_n^0(O \mid \bepsilon) \evalat{\bepsilon = \bzero} &= D^*_{\CATEsymb}(P_0)(O).
    \end{align}
    \item Assumption \ref{condition:bernstein-von-mises-hessian}: the Hessian of $\bepsilon \mapsto \log f_n^0(O \mid \bepsilon)$ evaluated at $\bepsilon = \bzero$ is 
    \begin{align}
        \Lambda_n^*(O) =& \frac{1}{g_0(A, X)^2} \left(\bar{Q}_0^{(A)}(X) - 1\right)\bar{Q}_0^{(A)}(X)  M^{-1} (1, V) (1,V)^\top M^{-1} \\
        &+ (\psi_0^\CATEsymb(X) - B^\CATEsymb(P_0) (1, V)^\top)^2  M^{-1} (1, V) (1,V)^\top M^{-1},
    \end{align}
    and its expectation is given by
    \begin{align}
        P_0\left[\Lambda_n^*(O)\right] &= M^{-1} P_0 \left[ \left( \frac{\mathrm{Var}_P(Y \mid A, X)}{g_0(A, X)^2} + \left(\psi_0^\CATEsymb - B^\CATEsymb(P_0)(1, V)^\top \right)^2 \right) (1,V)(1,V)^\top \right] M^{-1},
    \end{align}
    which is the variance of the EIF $D^*(P_0)$.
    \item Assumption \ref{condition:bernstein-von-mises-prior}: this condition can be easily satisfied by proper choice of the prior density $\pi_{\beta}$.
 \end{itemize}
\end{example}

\subsection{Metropolis-Hastings Algorithm}
\label{section:metropolis-hastings}

Let 
\begin{equation*}
    f^*_n(O_{1:n} \mid \bepsilon) = \prod_{i=1}^n f(O_i | \bepsilon, \bar{Q}_n^*, Q_n^*, \eta_n, \sigma_n)
\end{equation*}
be the targeted likelihood  akin to $f_{n}^{0}(O_{1:n} \mid \bepsilon)$ that we defined and used  in Section~\ref{section:bvm-proof} (we simply substitute the estimated features for the true features). Let $\pi_{\epsilon}$ be a prior distribution for $\bepsilon$ as in~\eqref{eq-epsilon-prior}. Let $\bepsilon^{(0)}$
 be a starting value for the Markov Chain specified by the user, $\tau$ a tuning parameter, and $T$ the number of iterations. The algorithm is implemented on the log-scale to avoid numerical precision issues. For $t = 1, \dots, T$:
 \begin{enumerate}
     \item Draw $\bepsilon^\prime \sim \mathrm{N}(\bepsilon^{(t - 1)}, \tau \bm{I})$.
     \item Draw $u \sim \mathrm{Uniform}(0, 1)$.
     \item Let \begin{align}
         \ell^{t-1} &= \log f_n^*(O_{1:n} \mid \bepsilon^{(t - 1)}) + \log \pi_\epsilon(\bepsilon^{(t - 1)}), \\
         \ell^\prime &= \log f_n^*(O_{1:n} \mid \bepsilon^\prime) + \log \pi_\epsilon(\bepsilon^\prime).
     \end{align}
     \item Let $A^{(t)} = \mathds{I}(\ell^\prime - \ell^{t-1} > \log(u))$. Then
     \begin{align}
        \bepsilon^{(t)} &= \begin{cases}
            \bepsilon^{\prime} & \text{if } A^{(t)} = 1, \\
            \bepsilon^{(t-1)} & \text{otherwise.}
        \end{cases}
    \end{align}
    \item Set $\bbeta^{(t)} = B(\bepsilon^{(t)})$.
\end{enumerate}
The mean $\bar{A} = \frac{1}{T}\sum_{t=1}^T A^{(t)}$ is called the \textit{acceptance ratio}.

The value of $\tau$, the standard deviation of the proposal distribution, is found by a binary search procedure. Let $\tau_{\mathrm{min}}$ and $\tau_{\mathrm{max}}$ define the upper and lower search bounds for $\tau$. Let $K$ be the maximum number of iterations in the search. Let $\bar{A}(\tau)$ be the acceptance ratio from the Metropolis-Hastings algorithm given above with proposal distribution $\tau$ run for $1000$ iterations. Then iterate:
\begin{enumerate}
    \item Let $\tau_l = \tau_{\mathrm{min}}$ and $\tau_u = \tau_{\mathrm{max}}$. 
    \item For $k = 1, \dots, K$:
    \begin{enumerate}
        \item Let $\tau_k = (\tau_l + \tau_u) / 2$.
        \item Let $\bar{A}_k = \bar{A}(\tau_k)$.
        \item If $\bar{A}_k \in (0.3, 0.4)$, then break and return $\tau_k$.
        \item If $\bar{A}_k \leq 0.3$, set $\tau_u = \tau_k$.
        \item If $\bar{A}_k \geq 0.4$, set $\tau_l = \tau_k$.
    \end{enumerate}
    \item Return $\tau_K$.
\end{enumerate}
This procedure targets an acceptance ratio in between $30\%$ and $40\%$.

\subsection{Simulation Study Results}

\begin{tiny}
\begin{table}[ht]
    \centering
    \begin{tabular}{|r|l|r|r|r|r|}
    \hline
    \multicolumn{2}{|c}{} & \multicolumn{2}{c}{$\beta_1$} & \multicolumn{2}{c|}{$\beta_2$} \\
    \hline
    $N$ & Estimator & 95\% Coverage & Absolute Bias & 95\% Coverage & Absolute Bias \\
    \hline
    \multicolumn{6}{|l|}{(a): $g_P$ correctly specified, $\bar{Q}_P$ correctly specified} \\
    \hline
    50 & Frequentist & 0.87 & 0.099 & 0.78 & 0.110\\
    \hline
     & Bayesian & 0.90 & 0.100 & 0.85 & 0.110\\
    \hline
    100 & Frequentist & 0.92 & 0.069 & 0.91 & 0.066\\
    \hline
     & Bayesian & 0.94 & 0.069 & 0.95 & 0.067\\
    \hline
    250 & Frequentist & 0.93 & 0.043 & 0.94 & 0.041\\
    \hline
     & Bayesian & 0.94 & 0.043 & 0.95 & 0.042\\
    \hline
    500 & Frequentist & 0.96 & 0.030 & 0.95 & 0.030\\
    \hline
     & Bayesian & 0.97 & 0.030 & 0.97 & 0.030\\
    \hline
    750 & Frequentist & 0.95 & 0.025 & 0.96 & 0.022\\
    \hline
     & Bayesian & 0.96 & 0.024 & 0.97 & 0.023\\
    \hline
    1000 & Frequentist & 0.92 & 0.022 & 0.95 & 0.020\\
    \hline
     & Bayesian & 0.92 & 0.022 & 0.95 & 0.020\\
    \hline
    \multicolumn{6}{|l|}{(b): $g_P$ correctly specified, $\bar{Q}_P$ incorrectly specified} \\
    \hline
    50 & Frequentist & 0.93 & 0.100 & 0.88 & 0.110\\
    \hline
     & Bayesian & 0.95 & 0.100 & 0.95 & 0.110\\
    \hline
    100 & Frequentist & 0.96 & 0.068 & 0.90 & 0.080\\
    \hline
     & Bayesian & 0.98 & 0.068 & 0.96 & 0.081\\
    \hline
    250 & Frequentist & 0.96 & 0.042 & 0.92 & 0.048\\
    \hline
     & Bayesian & 0.98 & 0.042 & 0.95 & 0.049\\
    \hline
    500 & Frequentist & 0.95 & 0.032 & 0.94 & 0.035\\
    \hline
     & Bayesian & 0.97 & 0.032 & 0.95 & 0.035\\
    \hline
    750 & Frequentist & 0.96 & 0.025 & 0.94 & 0.028\\
    \hline
     & Bayesian & 0.97 & 0.025 & 0.95 & 0.028\\
    \hline
    1000 & Frequentist & 0.96 & 0.021 & 0.94 & 0.024\\
    \hline
     & Bayesian & 0.97 & 0.021 & 0.95 & 0.025\\
    \hline
    \multicolumn{6}{|l|}{(c): $g_P$ incorrectly specified, $\bar{Q}_P$ correctly specified} \\
    \hline
    50 & Frequentist & 0.88 & 0.100 & 0.86 & 0.099\\
    \hline
     & Bayesian & 0.89 & 0.100 & 0.88 & 0.100\\
    \hline
    100 & Frequentist & 0.92 & 0.068 & 0.92 & 0.067\\
    \hline
     & Bayesian & 0.91 & 0.068 & 0.93 & 0.068\\
    \hline
    250 & Frequentist & 0.94 & 0.040 & 0.92 & 0.042\\
    \hline
     & Bayesian & 0.95 & 0.040 & 0.94 & 0.042\\
    \hline
    500 & Frequentist & 0.95 & 0.029 & 0.96 & 0.029\\
    \hline
     & Bayesian & 0.95 & 0.029 & 0.96 & 0.029\\
    \hline
    750 & Frequentist & 0.96 & 0.024 & 0.95 & 0.024\\
    \hline
     & Bayesian & 0.96 & 0.024 & 0.94 & 0.024\\
    \hline
    1000 & Frequentist & 0.95 & 0.020 & 0.96 & 0.019\\
    \hline
     & Bayesian & 0.95 & 0.020 & 0.96 & 0.019\\
    \hline
    \multicolumn{6}{|l|}{(c): $g_P$ incorrectly specified, $\bar{Q}_P$ incorrectly specified} \\
    \hline
    50 & Frequentist & 0.89 & 0.120 & 0.85 & 0.120\\
    \hline
     & Bayesian & 0.90 & 0.120 & 0.89 & 0.120\\
    \hline
    100 & Frequentist & 0.88 & 0.087 & 0.92 & 0.076\\
    \hline
     & Bayesian & 0.88 & 0.087 & 0.94 & 0.077\\
    \hline
    250 & Frequentist & 0.90 & 0.057 & 0.94 & 0.047\\
    \hline
     & Bayesian & 0.89 & 0.057 & 0.96 & 0.047\\
    \hline
    500 & Frequentist & 0.82 & 0.049 & 0.95 & 0.032\\
    \hline
     & Bayesian & 0.81 & 0.049 & 0.96 & 0.032\\
    \hline
    750 & Frequentist & 0.77 & 0.046 & 0.96 & 0.028\\
    \hline
     & Bayesian & 0.76 & 0.046 & 0.97 & 0.028\\
    \hline
    1000 & Frequentist & 0.74 & 0.042 & 0.95 & 0.023\\
    \hline
     & Bayesian & 0.71 & 0.042 & 0.95 & 0.023\\
    \hline
    \end{tabular}
    \caption{Empirical coverage of 95\% credible (confidence) intervals and absolute bias of estimators in the simulation study.}
    \label{tab:simulation_results}
\end{table}
\end{tiny}
\end{document}